\documentclass{amsart}

\usepackage{graphicx} 
\usepackage{tikz}
\usetikzlibrary{calc}
\usepackage{graphicx}
\usepackage{booktabs}
\usepackage{amssymb}
\usepackage{amsmath}
\usepackage{amsthm, amsfonts, mathrsfs}
\usepackage{hyperref}
\newcommand{\Sp}{\mathbb{S}}
\usepackage{comment}
\newtheorem{theorem}{Theorem}
\newtheorem{lemma}[theorem]{Lemma}

\newtheorem{corollary}[theorem]{Corollary}

\newtheorem{question}[theorem]{Question}

\numberwithin{equation}{section}

\DeclareMathOperator{\Ind}{Ind}
\DeclareMathOperator{\VR}{VR}
\DeclareMathOperator{\Cov}{Cov}
\DeclareMathOperator{\supp}{supp}
\DeclareMathOperator{\st}{st}
\DeclareMathOperator{\lk}{lk}
\DeclareMathOperator{\Cl}{Cl}
\DeclareMathOperator{\im}{im}

\newcommand{\KG}{\mathrm{KG}}
\newcommand{\Mat}{\mathcal{M}}
\newcommand{\Nerve}{\mathcal{N}}
\newcommand{\Dow}{\mathcal{D}}
\newcommand{\Q}{\mathbb{Q}}
\newcommand{\Z}{\mathbb{Z}}
\newcommand{\F}{\mathbb{F}}
\newcommand{\tH}{\widetilde{H}}
\newcommand{\K}{\mathbb{K}}
\newcommand{\R}{\mathbb{R}}

\colorlet{dhcol}{violet!75!black}   
\colorlet{dvcol}{teal!70!black}     
\colorlet{boxfill}{gray!8}

\title{The rational homology of independence complexes of Kneser graphs on triples}
\date{\today}
\author[Z. Feng]{Ziqin Feng}
\address{Department of Mathematics and Statistics\\Auburn University\\Auburn\\AL~36849\\USA}
\email{zzf0006@auburn.edu}

\subjclass[2020]{05C69, 05E45, 55N31, 55U10}
\keywords{Independence Complex, Vietoris-Rips Complex, Simplicial Complex, Homology, Connectivity, Kneser Graph, Projective Plane}

\begin{document}

\maketitle

\begin{abstract}
The independence complex of the Kneser graph $\KG(n,k)$ is the simplicial complex
of pairwise intersecting families of $n$-subsets of $[2n+k]$; equivalently, it is
a Vietoris--Rips complex of the $n$-subsets of $[2n+k]$ under the
symmetric-difference metric. Feng and Wang asked whether, for $n=3$, its reduced
homology is nontrivial only in dimensions $6$ and $9$, and conjectured the rank in
dimension $9$. We answer the question affirmatively over the rationals and determine all the associated Betti numbers.  For every $k\ge0$, the only nonzero reduced Betti
numbers using rationals of $\Ind(\KG(3,k))$ are
\[
  \beta_6=\binom{k+5}{7}+29\binom{k+6}{7}
  \qquad\text{and}\qquad
  \beta_9=\binom{k+6}{6}.
\]
The proof uses  an inductive decomposition based on a simplicial retraction
and Barmak's star clusters, Dowker duality together with Jonsson's computation of
the homology of complexes of graphs with bounded covering number, and  a relative
Mayer--Vietoris spectral sequence which reduces the problem to the six-element
sets.
\end{abstract}
 
\maketitle
 
\section{Introduction}\label{sec:intro}
 
For integers $n\ge1$ and $k\ge0$, the \emph{Kneser graph} $\KG(n,k)$ has the
$n$-subsets of $[2n+k]=\{1,\dots,2n+k\}$ as vertices, two of them being adjacent
when they are disjoint. Kneser graphs are a classical meeting point of
combinatorics and topology where Lov\'asz proved Kneser's conjecture by  determining their chromatic number
\cite{lovasz1978kneser} using the Borsuk-Ulam theorem, a classic result in topology. 
This paper investigates the \emph{independence complexes} of Kneser graph. A set of vertices of
$\KG(n,k)$ is independent exactly when its members pairwise intersect, so
$\Ind(\KG(n,k))$ is the complex of pairwise intersecting families of $n$-subsets
of $[2n+k]$, the objects of the Erd\H{o}s--Ko--Rado theory of intersecting
families \cite{godsil2016erdos} when viewed as a simplicial complex.
 
The same complex also arises in applied topology. If the $n$-subsets of $[2n+k]$ are
given the symmetric-difference metric, two of them are at distance at most
$2(n-1)$ precisely when they intersect, so $\Ind(\KG(n,k))$ is the Vietoris--Rips
complex of this finite metric space at scale $2(n-1)$ as noted in \cite{FW}.
Vietoris--Rips complexes of metric spaces are a central object of
topological data analysis, and in recent years a lot of attention
has been drawn to study the homotopy types of Vietoris-Rips complexes of different
metric spaces (\cite{adamaszek2022vietoris,adamaszek2020homotopy,adamaszek2017homotopy, adams2018vietoris, feng2024vietoris,  gasparovic2018complete}, etc).

 Barmak in \cite{barmak2013star} proved that Ind(KG($2, k$)) is homotopy equivalent to the wedge sum of ${k+3\choose 3}$-many copies of $\Sp^2$. In~\cite{FW}, Wang and the author studied these complexes through the Vietoris--Rips
description, introducing \emph{concentration maps} between the underlying metric
spaces. For $n=3$ they computed the Betti numbers using coefficient $\F_2$ of
$\Ind(\KG(3,k))$ for $k\le3$, obtaining $\beta_6=0,29,233,1052$ and
$\beta_9=1,7,28,84$, and found no homology in any other dimension. On the basis of
these values they conjectured that $\beta_9=\binom{k+6}{6}$ for every $k$, and
asked whether dimensions $6$ and $9$ are the only ones carrying nontrivial reduced
homology \cite[Question~2]{FW}. Our main result, Theorem~\ref{thm:main}, answers
this question affirmatively. We prove  the conjectured value of $\beta_9$, and
determine $\beta_6$ as well. In fact we show that for every $k\ge0$ the reduced rational homology of
$\Ind(\KG(3,k))$ vanishes outside dimensions $6$ and $9$, and
\[
  \beta_6=\binom{k+5}{7}+29\binom{k+6}{7},
  \qquad
  \beta_9=\binom{k+6}{6}.
\]
For $k=0$ the complex is the boundary of a $10$-dimensional cross-polytope, so
$\Ind(\KG(3,0))\cong S^9$ and $\beta_6=0$; homology in dimension six appears from
$k=1$ onwards. The formula reproduces all the values computed in~\cite{FW}.
 
\subsection*{Strategy of the proof}
We'll give a brief discussion of the proof strategy. Write $X_m=\Ind(\KG(3,m-6))$ for the complex of pairwise intersecting families of
triples of $[m]$. The proof is an induction on $m$, starting from $X_6\cong S^9$,
in which the inductive step is reduced, through a sequence of identifications, to homology computations on six points. The proof is divided into the following steps. 
 
\emph{The inductive decomposition} (Section~\ref{sec:decomposition}). Deleting the
element $m$ relates $X_m$ to $X_{m-1}$. A concentration map of~\cite{FW} makes
$X_{m-1}$ a simplicial retract of $X_m$ (Lemma~\ref{lem:concentration}), and the
triples containing $m$ span a simplex whose star cluster, in the sense of
Barmak~\cite{barmak2013star}, is contractible. A Mayer--Vietoris argument then
yields split short exact sequences
\[
  0\longrightarrow\tH_q(X_{m-1};\Q)\longrightarrow\tH_q(X_m; \Q)\longrightarrow
  \tH_{q-1}(L_{m-1}; \Q)\longrightarrow0
\]
(Lemma~\ref{lem:decomposition}), where $L_{m-1}$ is the complex of pairwise
intersecting families of triples of $[m-1]$ with transversal number at most two. Since this short exact sequence splits, everything reduces to the homology of the complexes $L_\ell$. 
 
\emph{A complex with known homology} (Section~\ref{sec:decomposition}). Dropping the
intersecting condition gives a larger complex $D_\ell\supseteq L_\ell$, consisting
of all families of triples of $[\ell]$ with transversal number at most two. By
Dowker duality $D_\ell$ is homotopy equivalent to the complex $\Cov(\ell,3)$ of
graphs on $[\ell]$ with covering number at most three
(Lemma~\ref{lem:dowker}), whose homology was determined by Jonsson
(Theorem~\ref{thm:jonsson}). It remains to understand the pair $(D_\ell,L_\ell)$.
 
\emph{Localisation at six-element sets} (Section~\ref{sec:local}). Covering
$D_\ell$ by the simplices of triples meeting a fixed pair produces a relative
Mayer--Vietoris spectral sequence whose $E^1$-page is indexed by graphs on
$[\ell]$. The key structural fact is a classification
(Lemma~\ref{lem:classification}): only three kinds of graphs contribute, all
supported on exactly six vertices --- $K_{3,3}$ minus a matching, $K_2\sqcup C_4$
and $3K_2$. Consequently the relative homology of $(D_\ell,L_\ell)$ is the direct
sum of local contributions, one for each six-element subset of $[\ell]$
(Lemma~\ref{lem:sixset}), and on a single six-element set the second page is
computed explicitly (Lemma~\ref{lem:localE2}); there the contribution of the
graphs $K_{3,3}\setminus M$ is governed by the $3\times3$ chessboard complex.
 
\emph{One finite computation} (Section~\ref{sec:sixpoint}). The spectral sequence
determines the local relative homology only up to the Euler characteristic. The
missing input is the vanishing $\tH_6(L_6;\Q)=\tH_7(L_6;\Q)=0$, which we obtain by
an exact computation of boundary ranks over $\F_2$ (Lemma~\ref{lem:sixpoint}); this
is the only computer-assisted step of the proof, and it is documented in
Appendix~\ref{sec:appendix}. It determines $\tH_*(L_\ell;\Q)$ for every $\ell$
(Corollary~\ref{cor:LN}), and summing the resulting contributions over the
induction gives Theorem~\ref{thm:main} (Section~\ref{sec:proof}).
 
The argument works over $\Q$ throughout. By Jonsson's theorem the intermediate complex $D_\ell$ has $2$-torsion in its integral homology, and whether torsion survives in $\Ind(\KG(3,k))$ is one of the questions stated in Section~\ref{sec:questions}, together with the homotopy type of these complexes and the case $n\ge4$.
 

\section{Notations and Preliminaries}

\subsection{Simplicial complexes}

An \emph{abstract simplicial complex} $K$ on a finite vertex set $V$ is a family of subsets of $V$ such that $\tau\subseteq\sigma\in K$ implies $\tau\in K$. We
include the empty face in every complex. The elements of a simplicial complex $K$ are called simplexes. 

The \emph{dimension} of a simplex with $j+1$ vertices is denoted to be $j$, and  a maximal simplex is called a \emph{facet}. For convenience, we define the binomial coefficients
$\binom{a}{b}$ to be zero whenever $b<0$ or $b>a$.

For a simplex $\sigma\in K$, the \emph{closed star} and the \emph{link}  of $\sigma$ are defined to be
\[
  \st_K(\sigma)=\{\eta\in K:\eta\cup\sigma\in K\},\qquad
  \lk_K(\sigma)=\{\eta\in\st_K(\sigma):\eta\cap\sigma=\emptyset\}.
\]

A vertex
$v$ is a \emph{cone vertex} of a complex $K$ if $\sigma\cup\{v\}\in K$ for every
$\sigma\in K$; then $K=\st_K(v)$ is contractible. A \emph{full subcomplex} is the subcomplex induced by a subset of the vertex
set. A complex is \emph{flag} if every clique of its $1$-skeleton spans a simplex;
$\Cl(G)$ denotes the clique complex of a graph $G$.

The \emph{star cluster} of $\sigma$ is the subcomplex
\[
  \operatorname{SC}_K(\sigma)=\bigcup_{v\in\sigma}\operatorname{st}_K(v).
\] Barmak \cite{barmak2013star} shows that in a flag complex the star cluster of any simplex is contractible.

For complexes $K,L$ on
disjoint vertex sets, we denote $K*L=\{\sigma\cup\tau:\sigma\in K,\ \tau\in L\}$.  The boundary of the
$t$-dimensional cross-polytope, whose $2t$ vertices form $t$ antipodal pairs and
whose simplexes contain at most one vertex from each pair, is the join of $t$ copies
of $S^0$. Hence, 
\begin{equation}\label{eq:crosspoly}
  \partial C_t=(S^0)^{*t}\cong S^{t-1}.
\end{equation}

Unless stated otherwise, homology has rational coefficients, and so the $i$th Betti number in default is $\beta_i(K)=\dim_\Q\tH_i(K;\Q)$. We define that  $\tH_{-1}(K)=0$ for every nonempty complex $K$. 

For a pair $(K,L)$ of simplicial complexes with $L\subseteq K$,   the long exact
sequence of the pair \cite{Hatcher} is given below
\begin{equation}\label{eq:les}
\cdots\longrightarrow\tH_q(L)\longrightarrow\tH_q(K)\longrightarrow H_q(K, L)\longrightarrow\tH_{q-1}(L)\longrightarrow \cdots.
\end{equation}

For a pair $(K, L)$ of finite complexes,
we denote $\chi(K,L)=\sum_i(-1)^i\dim H_i(K,L)$ and  $\widetilde\chi(K)=\sum_i(-1)^i\beta_i(K)$. 

We'll use the Nerve Lemma in the following form \cite{Bjorner}.  If
$\mathcal{U}=\{K_a\}_{a\in I}$ is a finite family of subcomplexes with
$K=\bigcup_aK_a$ and every nonempty finite intersection of members of
$\mathcal{U}$ contractible, then $K\simeq\Nerve(\mathcal{U})$.  Here, the nerve
$\Nerve(\mathcal{U})$ is the complex on $I$ whose faces are the sets
$J\subseteq I$ with $\bigcap_{a\in J}K_a\neq\emptyset$. 

\subsection{Independence complexes and Kneser graphs}

The Kneser graph $\KG(n, k)$ is the graph with vertex set being the $n$-subsets of $[2n+k]$ and that two vertices $A$ and $B$ being adjacent if $A\cap B=\emptyset$. For a finite graph $G$, the \emph{independence complex} $\Ind(G)$ is the complex of independent vertex sets of $G$, here a set of vertices is \emph{independent} if it contains no edge. Hence a collection of the vertices in $\KG(n, k)$ forms a simplex in $\Ind(\KG(n, k))$ if and only if it is pairwise intersecting.   Interestingly, the independence complex of Kneser graph closely relates to the Vietoris-Rips complexes of some metric spaces.

For a finite metric space $(M,d)$ and a scale  $r\ge0$, the \emph{Vietoris--Rips complex} $\VR(M;r)$ has vertex set $M$ so that a finite subset $\sigma\subseteq M$ forms a simplex whenever $d(x,y)\le r$ for all $x,y\in\sigma$. 

For $S\subseteq[m]$ with $|S|\ge n$ let $F^S_n$ be the set of $n$-subsets of
$S$, equipped with the symmetric-difference metric
$d(A,B)=|A\bigtriangleup B|$. For $n$-sets $A$ and $B$,
  $d(A,B)=2(n-|A\cap B|)$. Hence  $d(A,B)\le2(n-1)$ if and only if  $A\cap B\neq\emptyset$. Consequently
\begin{equation}\label{eq:VR}
  \Ind\bigl(\KG(n,k)\bigr)=\VR\bigl(F^{[2n+k]}_n;2(n-1)\bigr),
\end{equation}
both complexes being the flag complex of the same graph. 

For the rest of the
paper, we denote the complex \begin{equation}\label{eq:Xm}
  X_m=\VR\bigl(F^{[m]}_3;4\bigr)=\Ind\bigl(\KG(3,m-6)\bigr),\qquad m\ge 6. 
\end{equation}
The simplexes in $X_m$ are exactly the
pairwise intersecting families of triples in $[m]$.

\subsection{Transversals, matchings and covering complexes}

Let $\mathcal{H}$ be a family of subsets of a finite set $V$. A \emph{transversal} of $\mathcal{H}$ is a set $P\subseteq V$ meeting every member of $\mathcal{H}$. And, when $\mathcal{H}$ is nonempty, we define
\[
  \tau(\mathcal{H})=\min\{|P|:P\cap H\neq\emptyset\ \text{for all}\
  H\in\mathcal{H}\}.
\]

When $\mathcal{H}$ is the edge set of a graph $G$, this is the vertex-cover number of $G$.
Note that every superset of a transversal is again a transversal. 

The \emph{support} of a graph $G=(V,E)$ is $\supp(G)=\bigcup_{e\in E}e$, i.e.,  the set
of its nonisolated vertices. We say that $G$ is \emph{supported on} $S$ when
$\supp(G)=S$. 

A \emph{matching} of $G$ is a set of pairwise disjoint edges, \emph{perfect} if it covers every vertex. And we denote  $\nu(G)$ to be the maximum size of a matching. 
From K\"onig's theorem \cite{diestel2017graph} we know that $\tau(G)=\nu(G)$ for bipartite $G$.

We denote $K_{A,B}$ to be the complete bipartite graph with parts $A,B$, $P_s$ and $C_s$ to be the path and cycle on $s$ vertices, and $rK_2$ to be  a disjoint union of $r$ edges. Also $\sqcup$ denotes the disjoint union operator of graphs. 

For a graph $G=(V, E)$, the \emph{matching complex} $\Mat(G)$ has the vertex set $E$ with the matchings of $G$ being simplexes.  We denote
$\Mat_{3,3}=\Mat(K_{3,3})$, the $3\times3$ chessboard complex. 

Finally, we give the definition of covering complexes as below
\[
  \Cov(N,r)=\Bigl\{E\subseteq\tbinom{[N]}{2}:\tau(E)\le r\Bigr\}.  
\]
Note that this is a simplicial complex whose vertices are  the edges in the complete graph on the vertex set $[N]$.

\subsection{Dowker duality}

For a relation $R\subseteq V\times W$ the two \emph{Dowker complexes} are
\[
\begin{aligned}
  \Dow_V(R)&=\{\sigma\subseteq V:\exists\,w\in W\ \forall v\in\sigma,\ vRw\},\\
  \Dow_W(R)&=\{\omega\subseteq W:\exists\,v\in V\ \forall w\in\omega,\ vRw\}.
\end{aligned}
\]
Dowker's theorem \cite{Dowker} gives a homotopy equivalence
\begin{equation}\label{eq:dowker}
  \Dow_V(R)\simeq\Dow_W(R).
\end{equation}

\subsection{The relative \v{C}ech--Mayer--Vietoris spectral sequence}
\label{ss:ss}

Let $(K,L)$ be a finite simplicial pair and $\{K_a\}_{a\in I}$ a finite cover of
$K$ by subcomplexes. Denote that $L_a=L\cap K_a$. Hence $L=\bigcup_aL_a$ and we say
that the pairs $(K_a,L_a)$ cover $(K,L)$. For $\emptyset\neq J\subseteq I$, let
$K_J=\bigcap_{a\in J}K_a$ and $L_J=\bigcap_{a\in J}L_a=L\cap K_J$. Fix a total
order, $<$,  on $I$ and then we get the first-quadrant double complex
\[
  C_{p,q}=\bigoplus_{a_0<\dots<a_p}
  C_q\bigl(K_{\{a_0,\dots,a_p\}},L_{\{a_0,\dots,a_p\}};\mathbb{F}\bigr),
\]
whose vertical differential is the relative boundary and whose horizontal
differential is the alternating sum of the maps induced by deleting one index.

\vspace{0.2in}

\begin{tikzpicture}[x=1mm,y=1mm,
  box/.style={draw,rounded corners=2pt,fill=boxfill,line width=0.3pt,
              minimum width=26mm,minimum height=10mm,font=\small},
  elt/.style={draw,rounded corners=2pt,line width=0.3pt,
              minimum width=36mm,minimum height=8mm,font=\small},
  dh/.style={->,>=stealth,line width=0.5pt,dhcol},
  dv/.style={->,>=stealth,line width=0.5pt,dvcol},
  lab/.style={font=\footnotesize}]
 
 
\foreach \p in {0,1,2} {
  \node[box] (A\p) at (\p*36,0)   {$C_{\p,q}$};
  \node[box] (B\p) at (\p*36,-22) {$C_{\p,q-1}$};
  \node[lab] at (\p*36,-33) {$p=\p$};
  \draw[dv] (A\p.south) -- node[right,lab] {$d_v$} (B\p.north);
}
\foreach \p/\pp in {1/0,2/1} {
  \draw[dh] (A\p.west) -- node[above,lab] {$d_h$} (A\pp.east);
  \draw[dh] (B\p.west) -- node[above,lab] {$d_h$} (B\pp.east);
}
\node[lab,anchor=east] at (-15,0)   {$q$};
\node[lab,anchor=east] at (-15,-22) {$q-1$};

\end{tikzpicture}
\vspace{0.1in}

Here and below we write $[J;z]$, for $J=\{a_0<\dots<a_p\}$ and
$z\in C_q(K_J,L_J;\mathbb{F})$, for the element $z$ regarded as lying in the
summand indexed by $J$; the same chain occurring in two different summands gives
two different elements of $C_{p,q}$, and the bracket keeps them apart. With this
notation
\[
  d_h[J;z]=\sum_{i=0}^p(-1)^i\bigl[J\setminus\{a_i\};\iota_iz\bigr]\quad(p\ge1),
  \qquad d_h=0\quad(p=0),
\]
where $\iota_i\colon(K_J,L_J)\hookrightarrow(K_{J\setminus\{a_i\}},
L_{J\setminus\{a_i\}})$ is the inclusion. With
$d_{\mathrm{tot}}=d_h+(-1)^pd_v$ on $C_{p,q}$, placed in total degree $p+q$, we
have the following standard statement.

\begin{lemma}\label{lem:ss} Let $\F$ be a field. 
$H_n(\operatorname{Tot}C_{\bullet,\bullet}; \F)\cong H_n(K,L;\mathbb{F})$ for every
$n$, and the filtration by horizontal degree gives a convergent first-quadrant
spectral sequence
\begin{equation}\label{eq:ss}
  E^1_{p,q}=\bigoplus_{\substack{J\subseteq I\\|J|=p+1}}
  H_q(K_J,L_J;\mathbb{F})\ \Longrightarrow\ H_{p+q}(K,L;\mathbb{F}),
\end{equation}
with $d^r\colon E^r_{p,q}\to E^r_{p-r,q+r-1}$. The first differential is induced
by $d_h$: since $d_h$ and $d_v$ commute, $d_h$ carries $d_v$-cycles to
$d_v$-cycles and $d_v$-boundaries to $d_v$-boundaries, so it descends to
$d_v$-homology, and $d^1$ is this descended map. Explicitly, for
$J=\{a_0<\dots<a_p\}$ and a class $\zeta\in H_q(K_J,L_J;\mathbb{F})$,
\begin{equation}\label{eq:d1}
  d^1[J;\zeta]=\sum_{i=0}^p(-1)^i
  \bigl[J\setminus\{a_i\};(\iota_i)_*\zeta\bigr],
\end{equation}
For index sets $J'$ and $J$ we write $d^1_{J',J}$ for the \emph{component} of
$d^1$ from the summand indexed by $J$ to that indexed by $J'$, that is, the
composite of $d^1$ with the inclusion of the $J$-summand and the projection onto
the $J'$-summand. By \eqref{eq:d1},
\[
  d^1_{J',J}=\begin{cases}(-1)^i(\iota_i)_*,& J'=J\setminus\{a_i\},\\
  0,&\text{otherwise,}\end{cases}
\]
so each summand has exactly $p+1$ components leaving it, one for each index that
can be deleted. In particular $d^1$ leaves $q$ unchanged.
\end{lemma}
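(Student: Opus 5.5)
The plan is the standard argument for the Mayer--Vietoris (\v{C}ech) double complex, adapted to relative chains. The first step identifies the total homology with $H_*(K,L;\F)$. For this I would use the augmentation
\[
\varepsilon\colon C_{0,q}=\bigoplus_a C_q(K_a,L_a;\F)\longrightarrow C_q(K,L;\F),
\]
which sums the images of the inclusions. The claim is that for each fixed $q$ the row
\[
\cdots\to C_{1,q}\xrightarrow{d_h}C_{0,q}\xrightarrow{\varepsilon}C_q(K,L;\F)\to0
\]
is exact. Everything is linear over simplices, so the row splits as a direct sum over the $q$-simplices $\sigma\in K\setminus L$. Here the point is that $C_q(K_J,L_J)$ has basis the $q$-simplices of $K_J$ not in $L_J$. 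Since $L_J=L\cap K_J$, these are exactly the $q$-simplices $\sigma\notin L$ with $\sigma\in K_a$ for all $a\in J$.

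Fix such a $\sigma$ and let $I_\sigma=\{a:\sigma\in K_a\}$, which is nonempty because the $K_a$ cover $K$. The $\sigma$-summand of the row is then the augmented simplicial chain complex of the full simplex on the vertex set $I_\sigma$, shifted by one. That complex is acyclic, so the row is exact. Two routine checks remain: $\varepsilon\circ d_h=0$, and $\varepsilon$ commutes with $d_v$ up to the sign convention $(-1)^p$ on the $p=0$ column.

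With these in hand, the standard double-complex lemma applies. For a first-quadrant double complex with exact augmented rows, the augmentation induces a quasi-isomorphism $\operatorname{Tot}C\to C_*(K,L)$. One way to see this is to filter by $q$: the spectral sequence that takes horizontal homology first has $E^1$ concentrated in the column $p=0$, where it equals $C_q(K,L)$. Alternatively, one runs a direct staircase/zig-zag argument. This gives $H_n(\operatorname{Tot})\cong H_n(K,L;\F)$. Using field coefficients is harmless here, and finiteness ensures boundedness.

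For the spectral sequence, I would filter $\operatorname{Tot}$ by horizontal degree, $F_s=\bigoplus_{p\le s}C_{p,q}$. The filtration is bounded, since $I$ is finite and $q\le\dim K$, so the spectral sequence converges. Its $E^0$ page is $C_{p,q}$ with differential $\pm d_v$, so $E^1_{p,q}$ is the vertical homology. Because the summands are independent under $d_v$, this is $\bigoplus_{|J|=p+1}H_q(K_J,L_J;\F)$. The signs $(-1)^p$ do not change homology. The differentials $d^r$ have bidegree $(-r,r-1)$, as usual for this filtration.

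The main point needing care is identifying $d^1$ and the commutation claim. I would first verify that $d_h d_v=d_v d_h$. This holds because each $\iota_i$ is a map of pairs, hence a chain map on relative chains. Then $d_{\mathrm{tot}}^2=0$, since $d_h^2=0$ by the usual simplicial alternating-sum identity and the cross terms cancel through the $(-1)^p$ sign. The differential $d^1$ is the connecting map of the filtration. On a class represented by a $d_v$-cycle $z$ in column $p$, it is $d_{\mathrm{tot}}z$ modulo $F_{p-2}$, which equals $d_h z$. Its effect on $d_v$-homology is therefore the map induced by $d_h$, which is \eqref{eq:d1}.

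The componentwise description follows directly by reading off the formula for $d_h$. The only summands hit from the $J$-summand are those indexed by $J\setminus\{a_i\}$, with sign $(-1)^i$ and map $(\iota_i)_*$. Since $d_h$ preserves $q$, so does $d^1$. I expect the only real subtlety to be the sign bookkeeping in the exactness of the augmented rows. Everything else is standard (cf. Brown, \emph{Cohomology of Groups}, VII.4, or Bott--Tu).
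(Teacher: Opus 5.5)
Your argument is correct, but it reaches the identification $H_n(\operatorname{Tot}C_{\bullet,\bullet})\cong H_n(K,L;\mathbb{F})$ by a different route from the paper.

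The paper never proves row exactness for the relative double complex. It quotes the absolute Mayer--Vietoris statement (Brown, Ch.~VII) twice: once for $K$ covered by the $K_a$, and once for $L$ covered by the $L_a$. It then observes that the relative double complex is the quotient of the first by the second, and concludes with the five lemma applied to the ladder of long exact sequences (total complexes versus the pair $(K,L)$).

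You instead run the absolute argument directly on relative chains. Because $L_J=L\cap K_J$, the group $C_q(K_J,L_J)$ has a basis of $q$-simplices $\sigma\in K_J\setminus L$. Hence each augmented row splits over $\sigma\in K\setminus L$ into the augmented chain complex of the full simplex on $I_\sigma\neq\emptyset$, which is acyclic.

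What your route buys:
\begin{itemize}
\item It is self-contained.
\item It needs neither the cited absolute result nor the naturality of connecting homomorphisms that the five-lemma ladder requires.
\item It works visibly over $\mathbb{Z}$ as well, matching the paper's closing remark in that subsection.
\end{itemize}
What it costs: it uses the specific choice $L_a=L\cap K_a$. The five-lemma route would go through verbatim for any subcomplexes $L_a\subseteq K_a$ with $\bigcup_a L_a=L$ and $L_J=\bigcap_{a\in J}L_a$.

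The remaining parts agree with what the paper asserts: the horizontal filtration, its boundedness and convergence, $E^1$ as vertical homology, and $d^1$ as the map induced by $d_h$ (since $d_{\mathrm{tot}}z=d_hz$ for a $d_v$-cycle $z$). You spell these out in more detail than the paper, which states them without proof.
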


For the homology case see \cite[Ch.~VII]{Brown}. The relative homology case follows by applying it to $K$ and to $L$, covered by the $K_a$ and the $L_a$.  The relative double complex is the quotient of the two absolute double homologies, and then the five lemma \cite{Hatcher} shows the relative augmentation is a quasi-isomorphism as well. The double complex is finite and first-quadrant, so the spectral sequence converges.

We use \eqref{eq:ss} only through the following consequences. First, for every
$n$,
\begin{equation}\label{eq:dimsum}
  \dim H_n(K,L;\mathbb{F})=\sum_{p+q=n}\dim E^\infty_{p,q};
\end{equation}
in particular $H_n(K,L)=0$ whenever $E^2_{p,q}=0$ for all $p+q=n$, since every
later page is a subquotient of $E^2$. Second, passing to the homology of a page
preserves the alternating sum, so
\begin{equation}\label{eq:eulerpage}
  \sum_{p,q}(-1)^{p+q}\dim E^r_{p,q}=\chi(K,L)\qquad\text{for every }r .
\end{equation}
Third, a morphism of such spectral sequences
which is an isomorphism on some page induces an isomorphism of the homology. This is the comparison theorem (see
\cite{McCleary}). All constructions of this subsection are valid over an
arbitrary field, and also over $\Z$.

\section{The inductive decomposition}\label{sec:decomposition}

\subsection{A concentration retraction}

Following Feng--Wang \cite[\S4]{FW}, for $S\subseteq S'\subseteq[m]$ with
$|S|\ge n$ and an $n$-subset $A$ of $S'$,  the \emph{concentration map} $\varphi^{S'}_S\colon F^{S'}_n\to F^S_n$ maps $A$ to an $n$-subset of $S$ which is $A\cap S$ adjoining the $|A\setminus S|$ smallest elements of
$S\setminus A$. In particular
\begin{equation}\label{eq:phifix}
  \varphi^{S'}_S(A)=A\qquad(A\in F^S_n).
\end{equation}
Feng and Wang show that concentration maps are nonexpanding for the
symmetric-difference metric and hence induce simplicial maps of Vietoris--Rips
complexes at every scale. The following special case is used in our discussion. The proof can be found in \cite{FW}. 

\begin{lemma}\label{lem:concentration}
Let $m\ge7$. Then $\varphi=\varphi^{[m]}_{[m-1]}$ is a simplicial map
$X_m\to X_{m-1}$ which fixes every vertex of $X_{m-1}$. Consequently $X_{m-1}$ is a
simplicial retract of $X_m$, and the inclusion $X_{m-1}\hookrightarrow X_m$ induces an injective homomorphism on reduced homology in any degree and over any coefficient ring.
\end{lemma}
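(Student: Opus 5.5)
The plan is to show directly that $\varphi=\varphi^{[m]}_{[m-1]}$ is a vertex map $F^{[m]}_3\to F^{[m-1]}_3$ preserving the relation ``intersecting''. Since $X_m$ and $X_{m-1}$ are flag complexes, preserving edges (and equality, which is trivial) is enough for simpliciality. Write $\varphi(A)$ concretely. If $m\notin A$, then $\varphi(A)=A$ by \eqref{eq:phifix}. If $m\in A$, then $\varphi(A)=(A\setminus\{m\})\cup\{c_A\}$, where $c_A=\min([m-1]\setminus A)$. This set exists because $m-1\ge 6>2=|A\setminus\{m\}|$.

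The main step is checking that $A\cap B\ne\emptyset$ implies $\varphi(A)\cap\varphi(B)\ne\emptyset$. We split into cases.
\begin{itemize}
\item If $m\notin A\cup B$, there is nothing to prove.
\item If $m\in A$ and $m\notin B$, any common element other than $m$ survives, since $A\setminus\{m\}\subseteq\varphi(A)$ and $\varphi(B)=B$.
\item If $m\in A$ and $m\notin B$ and $A\cap B=\{m\}$, this is impossible because $m\notin B$.
\item If $m\in A\cap B$ and $A$, $B$ share another element, that element survives in both images.
\item The delicate case is $A\cap B=\{m\}$. Write $A=\{a_1,a_2,m\}$ and $B=\{b_1,b_2,m\}$ with $\{a_1,a_2\}\cap\{b_1,b_2\}=\emptyset$. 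Then $c_A$ and $c_B$ are the smallest elements of $[m-1]$ avoiding $\{a_1,a_2\}$ and $\{b_1,b_2\}$ respectively. We must show $\{a_1,a_2,c_A\}\cap\{b_1,b_2,c_B\}\ne\emptyset$.
\end{itemize}

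For the delicate case, let $x=\min\bigl([m-1]\setminus\{a_1,a_2,b_1,b_2\}\bigr)$. If $c_A\ne x$, then $c_A<x$, so $c_A\in\{b_1,b_2\}$ and we are done. Symmetrically, if $c_B\ne x$, then $c_B\in\{a_1,a_2\}$ and we are done. Otherwise $c_A=c_B=x$ is a common element. This is the one real obstacle, and it is settled by this minimality argument. It is also exactly where the ``smallest missing elements'' rule of the concentration map is needed; the general nonexpanding statement in \cite{FW} covers it as well.

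With simpliciality established, $\varphi\circ i=\mathrm{id}_{X_{m-1}}$ on vertices by \eqref{eq:phifix}, and hence as simplicial maps, where $i\colon X_{m-1}\hookrightarrow X_m$ is the inclusion. Functoriality gives $\varphi_*\circ i_*=\mathrm{id}$ on $\tH_q(X_{m-1};R)$ for any coefficient ring $R$. Therefore $i_*$ is injective, and in fact split injective.
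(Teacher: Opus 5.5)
Your argument is correct. The paper does not verify simpliciality itself. It cites the Feng--Wang theorem that every concentration map $\varphi^{S'}_S$ is nonexpanding for the symmetric-difference metric, so it induces simplicial maps $\VR(F^{S'}_n;r)\to\VR(F^S_n;r)$ at every scale $r$. It then concludes exactly as you do: \eqref{eq:phifix} gives $\varphi\circ i=\mathrm{id}$, and functoriality of reduced homology gives a split injection.

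You instead check by hand the one property needed here: when a single point is deleted, intersecting triples have intersecting images. The only nontrivial case is $A\cap B=\{m\}$. Your minimality argument with $x=\min([m-1]\setminus\{a_1,a_2,b_1,b_2\})$ handles it soundly.

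Your route is self-contained and shows exactly where the ``smallest missing elements'' rule is used. The cited result is stronger: it gives $|\varphi(A)\cap\varphi(B)|\ge|A\cap B|$ for all $n$ and all $S\subseteq S'$, hence simpliciality at every scale, which this lemma does not need. Your third bullet describes a vacuous case and can be dropped.
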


\subsection{The decomposition sequence}

Fix $m\ge 7$. Note that  $X_{m-1}$ is the full subcomplex of
$X_m$ spanned by the triples contained in $[m-1]$. For each
$e=\{a,b\}\in\binom{[m-1]}{2}$, denote 
\[
  v_e=\{m,a,b\},\qquad
  U_m=\bigcup_{e\in\binom{[m-1]}{2}}\st_{X_m}(v_e),\qquad
  L_{m-1}=X_{m-1}\cap U_m .
\]

\begin{lemma}[Decomposition sequence]\label{lem:decomposition}
Let $m\ge7$. For every $q$ there is a short exact sequence of
$\Q$-vector spaces
\begin{equation}\label{eq:deletion}
  0\longrightarrow\tH_q(X_{m-1};\Q)\longrightarrow\tH_q(X_m;\Q)
  \longrightarrow\tH_{q-1}(L_{m-1};\Q)\longrightarrow0,
\end{equation}
and the same statement holds over any field. 

Furthermore, such a sequence splits, although
not naturally.
\end{lemma}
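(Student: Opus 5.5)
We write $X_m=X_{m-1}\cup U_m$ and apply Mayer--Vietoris. First we check that this is indeed a cover. Every simplex $\sigma$ of $X_m$ either consists of triples inside $[m-1]$, and then lies in $X_{m-1}$ (a full subcomplex), or it contains some triple $v_e=\{m,a,b\}$. In the latter case $\sigma\cup\{v_e\}=\sigma\in X_m$, so $\sigma\in\st_{X_m}(v_e)\subseteq U_m$. Hence $X_m=X_{m-1}\cup U_m$, and by definition $X_{m-1}\cap U_m=L_{m-1}$.

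Next we show $U_m$ is contractible. The vertices $v_e$ pairwise intersect, since all contain $m$, so $\{v_e\}_e$ is a simplex $\sigma_m$ of the flag complex $X_m$, and $U_m=\operatorname{SC}_{X_m}(\sigma_m)$ is its star cluster. Barmak's theorem then gives $U_m$ contractible.

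The reduced Mayer--Vietoris sequence therefore reads
\[
\cdots\to\tH_q(L_{m-1})\xrightarrow{(i_*,j_*)}\tH_q(X_{m-1})\oplus\tH_q(U_m)\to\tH_q(X_m)\xrightarrow{\partial}\tH_{q-1}(L_{m-1})\to\cdots,
\]
with $\tH_*(U_m)=0$. Here $i\colon L_{m-1}\hookrightarrow X_{m-1}$. It suffices to show that $i_*=0$ in every degree. Then the sequence breaks into the short exact sequences \eqref{eq:deletion}, in which the first map is induced by the inclusion $X_{m-1}\hookrightarrow X_m$.

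The key step, and the only real content, is the vanishing of $i_*$. The inclusion $L_{m-1}\hookrightarrow X_{m-1}$ equals the composite
\[
L_{m-1}\hookrightarrow U_m\hookrightarrow X_m\xrightarrow{\varphi} X_{m-1},
\]
because $\varphi$ fixes the vertices of $X_{m-1}$ by \eqref{eq:phifix} and Lemma~\ref{lem:concentration}, and so restricts to the identity on $L_{m-1}\subseteq X_{m-1}$. The composite factors through the contractible $U_m$, hence induces zero on reduced homology. Thus $i_*=0$. This argument works over any coefficient ring, in particular any field, and exactness over a field follows the same way.

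Finally, splitting is automatic over a field, since every short exact sequence of vector spaces splits. One can also see it concretely: $\varphi_*$ gives a left inverse of the first map. The splitting is not natural, because it depends on the choice of retraction or of a complement. The main point to verify carefully is the factorization through $U_m$: it uses only that $L_{m-1}\subseteq U_m$ and that $\varphi|_{X_{m-1}}=\mathrm{id}$, both of which are immediate from the definitions.
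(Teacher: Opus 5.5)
Your proposal is correct and follows essentially the same route as the paper. Both use the cover $X_m=X_{m-1}\cup U_m$, Barmak's contractibility of the star cluster $U_m$, the reduced Mayer--Vietoris sequence, and the concentration retraction to force $\tH_q(L_{m-1})\to\tH_q(X_{m-1})$ to vanish. The only difference is cosmetic: you kill this map by factoring the inclusion through $U_m$ via $\varphi$, whereas the paper deduces its vanishing from exactness together with injectivity of $\tH_q(X_{m-1})\to\tH_q(X_m)$. Your remark that $\varphi_*$ is a left inverse, so the sequence splits over any coefficient ring, is a small bonus.
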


\begin{proof}
Note that every vertex of the complex $X_m$ but not in $X_{m-1}$ is a triple $v_e\subset [m]$ containing $m$ and any simplex in 
$X_m$ is either in $Y$ or 
in $\st_{X_m}(v_e)$ for some $v_e$ containing $m$. Hence, 
\[
  X_m=X_{m-1}\cup U_m.
\]

 For pairs $e_1,\dots,e_s$ in $[m-1]$, the triples $\{v_{e_j}: j=1, 2, \ldots, s\}$ are pairwise intersecting and form a simplex in $X_m$. And $U_m$ is the star cluster of the simplex $\{v_e: e\in \binom{[m-1]}{2} \}$ and by Barmak's result in \cite{barmak2013star}, it is contractible. 

Next, we use the relevant part of the reduced Mayer--Vietoris long exact sequence of $X_m=Y\cup U_m$ below, 

\begin{multline*}
\tH_q(L_N)\longrightarrow\tH_q(Y)\oplus\tH_q(U_m)\longrightarrow\tH_q(X_m)\\
  \longrightarrow\tH_{q-1}(L_N)\longrightarrow\tH_{q-1}(Y)\oplus\tH_{q-1}(U_m).
\end{multline*}

Note that $\tH_q(U_m)= 0$ for any $q$ since $U_m$ is contractible. Also  By the map induced by the inclusion
$X_{m-1}\hookrightarrow X_m$ is injective by
Lemma~\ref{lem:concentration}. Exactness therefore forces the  map
from $\tH_q(L_N)$ to $\tH_q(Y)$ to trivial for all $q$. Then the short exact sequence 
\eqref{eq:deletion} follows, and it splits because every short exact sequence of vector spaces splits.
\end{proof}

So in order to find $\tH_q(X_m; \Q)$ for any $q$,  understanding the homology of $L_{m-1}$ is essential. First, we build a result about the description of the complex $L_{m-1}$. Note that the simplexes in $X_m$ are exactly the pairwise intersecting families of triples in $[m]$. Hence if $\sigma$ is a simplex in $L_{m-1}$, then $\sigma\in \st_{X_m}(v_e)$ for some pair $e$ in $[m-1]$ which is equivalent to $\tau(\sigma)\leq 2$. 

\begin{lemma}\label{lem:Lm-1}
The complex $L_{m-1}$ consists  of the families $\sigma$ of triples of
$[m-1]$ satisfying
\begin{equation}\label{eq:LNdesc}
  v\cap v'\neq\emptyset\text{ for }v,v'\in\sigma, \text{ and }
  \tau(\sigma)\le2 .
\end{equation}
\end{lemma}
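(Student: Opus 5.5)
We prove the two inclusions directly from the definitions $U_m=\bigcup_e\st_{X_m}(v_e)$ and $L_{m-1}=X_{m-1}\cap U_m$, using only that $X_m$ is the flag complex of pairwise intersecting triples of $[m]$.

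\emph{Forward inclusion.} Let $\sigma\in L_{m-1}$. Since $\sigma\in X_{m-1}$, it is a family of triples of $[m-1]$ that pairwise intersect, which is the first condition in \eqref{eq:LNdesc}. Since $\sigma\in U_m$, there is a pair $e=\{a,b\}\subseteq[m-1]$ with $\sigma\in\st_{X_m}(v_e)$, so $\sigma\cup\{v_e\}\in X_m$. Hence every $v\in\sigma$ meets $v_e=\{m,a,b\}$. Because $v\subseteq[m-1]$ does not contain $m$, $v$ meets $\{a,b\}=e$. Thus $e$ is a transversal of $\sigma$ of size two, and $\tau(\sigma)\le2$. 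The empty face is trivially included.

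\emph{Reverse inclusion.} Let $\sigma$ be a nonempty family of triples of $[m-1]$ satisfying \eqref{eq:LNdesc}. Since $\sigma$ is pairwise intersecting, $\sigma\in X_{m-1}\subseteq X_m$. Choose a transversal $P$ with $|P|\le2$. If $|P|<2$, enlarge it to a pair $e\subseteq[m-1]$; this is possible because $m-1\ge6$, and a superset of a transversal is still a transversal. Every $v\in\sigma$ meets $e\subseteq v_e$. Moreover, $v_e$ meets itself and any other vertex of $\sigma$, so $\sigma\cup\{v_e\}$ is pairwise intersecting. By flagness (simplices of $X_m$ are exactly pairwise intersecting families) we get $\sigma\cup\{v_e\}\in X_m$. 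Hence $\sigma\in\st_{X_m}(v_e)\subseteq U_m$, and therefore $\sigma\in X_{m-1}\cap U_m=L_{m-1}$.

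The only point needing care is the reduction "meets $v_e$" $\Leftrightarrow$ "meets $e$" for triples avoiding $m$, together with the convention about the empty family and transversals of size less than two. Neither is a real obstacle.
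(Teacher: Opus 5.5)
Your proof is correct and follows the same route as the paper, which simply remarks that a pairwise intersecting family $\sigma$ of triples of $[m-1]$ lies in $\st_{X_m}(v_e)$ exactly when $e$ is a transversal, so that $\sigma\in L_{m-1}$ exactly when $\tau(\sigma)\le 2$. You spell out both directions of this equivalence, which the paper leaves implicit: triples of $[m-1]$ avoid $m$, $X_m$ is flag, and a one-element transversal can be enlarged to a pair.
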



\subsection{A related complex with known homology}

For $\ell\geq 3$, we define
\begin{equation}\label{eq:DN}
  D_\ell =\Bigl\{\sigma\subseteq\tbinom{[\ell]}{3}:\tau(\sigma)\le2\Bigr\}, 
\end{equation}
which is a simplicial complex.
 And by Lemma~\ref{lem:Lm-1}, $L_{m-1}=X_{m-1}\cap D_{m-1}\subseteq D_{m-1}$.
Note that the inclusion is strict. Note that the disjoint triples $\{1,2,3\}$ and $\{4,5,6\}$ have
the transversal $\{1,4\}$, so they form a simplex in  $D_{m-1}$ but not of $L_{m-1}$ for $m\geq 7$.

\begin{lemma}\label{lem:dowker}
For every $\ell\ge6$, the complex $D_\ell$ is homotopy equivalent to $\Cov(\ell,3)$.
\end{lemma}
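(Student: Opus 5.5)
The plan is to realize both complexes as the two Dowker complexes of a single relation and then apply Dowker's theorem \eqref{eq:dowker}. Take $V=\binom{[\ell]}{3}$, the triples, and $W=\binom{[\ell]}{2}$, the pairs. Define the relation $R\subseteq V\times W$ by $T\,R\,P$ if and only if $T\cap P\neq\emptyset$. It then suffices to show that $\Dow_V(R)=D_\ell$ and $\Dow_W(R)=\Cov(\ell,3)$ as simplicial complexes on the same vertex sets.

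\emph{First identification.} A family $\sigma\subseteq V$ lies in $\Dow_V(R)$ exactly when some pair $P$ meets every triple of $\sigma$, that is, when $\sigma$ has a transversal of size exactly two. This forces $\tau(\sigma)\le 2$. Conversely, if $\tau(\sigma)\le2$, take a transversal $P_0$ with $|P_0|\le 2$. Since every superset of a transversal is a transversal, we may enlarge $P_0$ to a $2$-subset of $[\ell]$, which is possible because $\ell\ge2$. The empty family is covered by any pair. Hence $\Dow_V(R)=D_\ell$ as in \eqref{eq:DN}.

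\emph{Second identification.} A set $\omega\subseteq W$ of pairs, viewed as the edge set of a graph on $[\ell]$, lies in $\Dow_W(R)$ exactly when some triple $T$ meets every edge of $\omega$, that is, when $\omega$ has a vertex cover of size exactly three. By the same enlargement argument, now using $\ell\ge3$, this is equivalent to $\tau(\omega)\le 3$. Thus $\Dow_W(R)=\Cov(\ell,3)$.

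\emph{Vertex sets.} We should also check that the vertex sets agree, so that no vertex is missing from either Dowker complex. Every triple meets some pair, and every pair meets some triple. Hence every singleton is a face on both sides, which matches $D_\ell$ having all triples as vertices and $\Cov(\ell,3)$ having all edges of $K_\ell$ as vertices.

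Dowker's theorem now gives $D_\ell\simeq\Cov(\ell,3)$. There is no real obstacle. The only point needing care is the passage from ``transversal of size at most $r$'' to ``transversal of size exactly $r$'' by padding, which is why a lower bound on $\ell$ is needed; the stated hypothesis $\ell\ge6$ is more than enough.
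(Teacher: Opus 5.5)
Your proposal is correct and follows the paper's proof: the same relation $T\cap P\neq\emptyset$ between triples and pairs, the identification of its two Dowker complexes with $D_\ell$ and $\Cov(\ell,3)$, and then Dowker's theorem. You also supply the padding argument (enlarging a transversal of size at most $r$ to one of size exactly $r$) and the check of vertex sets, both of which the paper leaves as ``straightforward to verify.''
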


\begin{proof}
Let $R$ be the relation between triples $v\in\binom{[\ell]}{3}$ and pairs
$e\in\binom{[N]}{2}$ given by $v\cap e\neq\emptyset$. It is straightforward to verify that $\mathcal{D}_{\binom{[\ell]}{3}}(R) =D_\ell$ and $\mathcal{D}_{\binom{[\ell]}{2}}(R) =\Cov(N,3)$. Hence, by Dowker's theorem \cite{Dowker}, $D_\ell\simeq\Cov(\ell,3)$
\end{proof}

In \cite{JonssonSIAM,JonssonBook}, Jonsson gave a complete description of the homology groups of $\Cov(\ell, 3)$ which is stated below (\cite[Theorem~26.18]{JonssonBook}).

\begin{theorem}[Jonsson]\label{thm:jonsson}
For every $\ell\ge6$,
\[
  \tH_i\bigl(\Cov(\ell,3);\Z\bigr)\cong
  \begin{cases}
    \Z^{\binom{\ell-1}{6}}, & i=5,\\[2pt]
    (\Z/2)^{\binom{\ell}{6}}, & i=6,\\[2pt]
    \Z^{9\binom{\ell}{6}}, & i=7,\\[2pt]
    \Z^{\binom{\ell}{5}}, & i=8,\\[2pt]
    0, & i\notin\{5,6,7,8\}.
  \end{cases}
\]
\end{theorem}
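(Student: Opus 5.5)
Since this statement is quoted from \cite[Theorem~26.18]{JonssonBook}, the paper can cite it. If I had to reprove it, I would follow Jonsson's approach through algebraic discrete Morse theory, organised as an induction on $\ell$ in the same spirit as Section~\ref{sec:decomposition}.

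\emph{Step 1: reduce to graphs needing the last vertex.} Let $\Gamma_\ell=\Cov(\ell,3)$, and let $\Gamma'_\ell\subseteq \Gamma_\ell$ be the graphs having a vertex cover of size at most $3$ that avoids vertex $\ell$. I would build an element matching on $\Gamma_\ell$, pairing $G$ with $G\triangle\{1\ell\}$ and then $G\triangle\{2\ell\}$, and so on, whenever both graphs lie in $\Gamma_\ell$. The goal is to show that the unmatched cells are exactly graphs in which $\ell$ has a controlled, small neighbourhood. This should give a long exact sequence relating $\tH_*(\Gamma_\ell)$, $\tH_*(\Gamma_{\ell-1})$ and a relative term. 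That relative term should be governed by $\Cov(\ell-1,2)$-type complexes (covers by two vertices together with $\ell$). These are lower-rank covering complexes whose homology is computed by the same method with $r=2$, and I would establish that case first.

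\emph{Step 2: pin down where homology lives.} I would run the same matching argument with an explicit description of critical cells. The goal is to show that all critical cells have dimension between $5$ and $8$: nothing below $5$ (connectivity) and nothing above $8$ (facets of $\Gamma_\ell$ all have a few surviving critical faces). The rational Betti numbers then follow from two inputs. The first is the recursion from Step 1, namely $\beta_i(\Gamma_\ell)=\beta_i(\Gamma_{\ell-1})+(\text{local term})$. The second is the Euler characteristic $\widetilde\chi(\Gamma_\ell)$, computed by counting graphs with $\tau\le3$ by number of edges. With the base case $\ell=6$ computed directly, Pascal-type identities give $\binom{\ell-1}{6}$, $9\binom{\ell}{6}$ and $\binom{\ell}{5}$.

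\emph{Step 3: the torsion in degree $6$.} This is where I expect the main obstacle. Critical cells in dimensions $6$ and $7$ come in families indexed by $6$-subsets of $[\ell]$, and the Morse differential between them must be computed. I expect it to act on each $6$-set block as multiplication by $\pm2$ on one generator, e.g.\ a class built from $3K_2$ or $K_{3,3}$-like graphs on the six points. Verifying this sign bookkeeping is the hard part: showing the two gradient paths reinforce rather than cancel, so that the coefficient is $2$ and not $0$. That would produce $(\Z/2)^{\binom{\ell}{6}}$ in $\tH_6$ and leave the free ranks above unchanged. For the rational application in this paper, only Steps 1 and 2 (the ranks over $\Q$) are actually needed.
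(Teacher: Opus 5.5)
Citing \cite[Theorem~26.18]{JonssonBook} is exactly what the paper does. It gives no proof of this theorem and uses only its rational consequence, Corollary~\ref{cor:DN}. Your reproof sketch, however, is a list of intended outcomes rather than an argument, and one of its structural claims is wrong in a way that matters here.

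\textbf{The problem.} You assert that Steps~1 and~2 already give the ranks over $\Q$, and that the delicate $\pm2$-versus-$0$ coefficient of Step~3 only affects torsion. That is not so.
\begin{itemize}
\item By the universal coefficient theorem, the stated integral groups give $\dim\tH_6(\Cov(\ell,3);\F_2)=\binom{\ell}{6}$ and $\dim\tH_7(\Cov(\ell,3);\F_2)=10\binom{\ell}{6}$. Over $\Q$ these are $0$ and $9\binom{\ell}{6}$. Degrees $5$ and $8$ and $\widetilde\chi$ are the same over both fields.
\item Hence critical-cell dimensions, characteristic-free rank counts and $\widetilde\chi$ cannot decide the rational Betti numbers in degrees $6$ and $7$. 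Note that $\widetilde\chi$ is a single linear relation; it fixes only $\beta_7-\beta_6$ once $\beta_5,\beta_8$ are known.
\item In your own Morse picture, a coefficient $0$ instead of $\pm2$ would leave one extra rational class in each of degrees $6$ and $7$ per six-set, with the same Euler characteristic.
\end{itemize}
These are exactly the inputs the paper needs: $\tH_6(D_\ell;\Q)=0$ and $\dim\tH_7(D_\ell;\Q)=9\binom{\ell}{6}$ are used in Lemma~\ref{lem:localrel}, Lemma~\ref{lem:aN} and Corollary~\ref{cor:LN}. So the Step~3 verification you postpone is required for the rational statement too.

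\textbf{The local term.} The additive recursion of Step~1 is in fact available. Merging vertex $\ell$ into vertex $1$ gives a simplicial retraction $\Cov(\ell,3)\to\Cov(\ell-1,3)$, just as in Lemma~\ref{lem:concentration}: send $\{a,\ell\}\mapsto\{a,1\}$ for $a\neq1$ and $\{1,\ell\}\mapsto\{1,2\}$. But then, for $\ell\ge7$, the split sequence forces $H_6(\Cov(\ell,3),\Cov(\ell-1,3);\Z)\cong(\Z/2)^{\binom{\ell-1}{5}}$. So whatever description of the relative term you use must itself exhibit the factor $2$. In particular, your unproved reduction to $\Cov(\ell-1,2)$-type complexes would have to exhibit it.
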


Then tensoring the integral homology with the rational
$\Q$ gives the following corollary by Lemma~\ref{lem:dowker}. 

\begin{corollary}\label{cor:DN}
For every $\ell\ge 6$,
\begin{equation}\label{eq:DNhomology}
  \tH_i(D_\ell;\Q)\cong
  \begin{cases}
    \Q^{\binom{\ell-1}{6}}, & i=5,\\[2pt]
    \Q^{9\binom{\ell}{6}}, & i=7,\\[2pt]
    \Q^{\binom{\ell}{5}}, & i=8,\\[2pt]
    0, & i\notin\{5,7,8\}.
  \end{cases}
\end{equation}
\end{corollary}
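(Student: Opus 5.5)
This corollary follows by combining the homotopy equivalence of Lemma~\ref{lem:dowker} with Jonsson's Theorem~\ref{thm:jonsson} and the universal coefficient theorem. The plan has three short steps.

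First, by Lemma~\ref{lem:dowker}, for $\ell\ge6$ we have $D_\ell\simeq\Cov(\ell,3)$. A homotopy equivalence induces isomorphisms on reduced homology with any coefficients, so
\[
\tH_i(D_\ell;\Q)\cong\tH_i(\Cov(\ell,3);\Q)\quad\text{for all } i.
\]
Here one checks that the Dowker relation in the proof of Lemma~\ref{lem:dowker} has both sides given by $v\cap e\neq\emptyset$ on $\binom{[\ell]}{3}\times\binom{[\ell]}{2}$; the symbol $N$ there should read $\ell$. A family of triples with a transversal $\{a,b\}$ is exactly a family all of whose members meet the pair $e=\{a,b\}$, so the first Dowker complex is $D_\ell$. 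Dually, a set of pairs all meeting a common triple $v$ has $\tau\le3$, because $v$ itself is a cover. Conversely, a graph with a vertex cover of size at most $3$ has a cover of size exactly $3$, since $\ell\ge3$ and supersets of covers are covers. So the second Dowker complex is $\Cov(\ell,3)$.

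Second, apply the universal coefficient theorem for rational coefficients. Since $\Q$ is flat over $\Z$, we get $\tH_i(K;\Q)\cong\tH_i(K;\Z)\otimes\Q$, with no Tor term. Tensoring Jonsson's groups with $\Q$:
\begin{itemize}
\item the free summands $\Z^{\binom{\ell-1}{6}}$, $\Z^{9\binom{\ell}{6}}$ and $\Z^{\binom{\ell}{5}}$ in degrees $5,7,8$ become $\Q$-vector spaces of the same ranks;
\item the degree-$6$ group $(\Z/2)^{\binom{\ell}{6}}$ satisfies $\Z/2\otimes\Q=0$, so it vanishes;
\item all other degrees are already $0$.
\end{itemize}
This gives exactly \eqref{eq:DNhomology}.

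There is no real obstacle. The only point needing care is the correct identification of the two Dowker complexes, which is the verification described in the first step.
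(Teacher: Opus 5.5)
Your proposal is correct and follows the paper's argument: Lemma~\ref{lem:dowker} gives $D_\ell\simeq\Cov(\ell,3)$, and tensoring Jonsson's integral groups with $\Q$ (flat, so no Tor term) kills the $2$-torsion in degree $6$ and keeps the ranks in degrees $5,7,8$. Your explicit identification of the two Dowker complexes, including extending smaller transversals and covers to ones of size exactly $2$ and $3$, just spells out what the paper calls straightforward.
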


\section{The local spectral sequences}\label{sec:local}

We'll use the long exact sequence induced by the pair $(D_{\ell}, L_\ell)$ to determine the homology of $L_\ell$. To do that, the missing information is the relative homology of the pair $(D_{\ell}, L_\ell)$. Next we will determine the relative homology $H_q(D_\ell, L_\ell)$ for any $q$ using the local spectral sequences at sit-point sets.  

Throughout this section we assume that $\ell \ge 6$ is fixed. For $e\in\binom{[\ell]}{2}$,  let
$\Delta_e$ be the full simplex on the triples meeting $e$ and 
also denote $\Lambda_e=L_\ell\cap\Delta_e$. Then, 
\begin{equation}\label{eq:cover}
  D_\ell=\bigcup_{e\in\binom{[\ell]}{2}}\Delta_e,\qquad L_\ell=\bigcup_{e\in\binom{[\ell]}{2}}\Lambda_e
\end{equation}
 Thus the pairs $(\Delta_e,\Lambda_e)$ with $e\in\binom{[\ell]}{2}$ form a cover of  the pair
$(D_\ell,L_\ell)$. For a nonempty $E\subseteq\binom{[\ell]}{2}$, let
\[
  \Delta_E=\bigcap_{e\in E}\Delta_e,\qquad \Lambda_E=\bigcap_{e\in E}\Lambda_e. 
\]
Hence, the spectral sequence \eqref{eq:ss} becomes
\begin{equation}\label{eq:ssDL}
  E^1_{p,q}=\bigoplus_{\substack{E\subseteq\binom{[\ell]}{2}\\|E|=p+1}}
  H_q(\Delta_E,\Lambda_E;\Q)\ \Longrightarrow\ H_{p+q}(D_\ell,L_\ell;\Q).
\end{equation}

A nonempty index set $E\subset\binom{[\ell]}{2}$ can be regarded as the edge set of a graph $G$ on $[\ell]$ and
write
\[
  C_3(G)=\Bigl\{v\in\tbinom{[\ell]}{3}:v\cap e\neq\emptyset\ \text{for every }
  e\in E\Bigr\}
\]
for the set of vertex covers of $G$ of cardinality exactly three, called
\emph{three-covers}. This doesn't mean that the minimum cover size of the graph $G$ is three. Then the following result is a direct implication of the definition of $\Delta_E$ and $\Lambda_E$. 

\begin{lemma}\label{lem:DeltaLambda}
Let $E\neq\emptyset$ with associated graph $G$. Then $\Delta_E$ is the full
simplex on $C_3(G)$, and $\Lambda_E$ is the clique complex on $C_3(G)$ in which
two three-covers are adjacent when they intersect. If $C_3(G)=\emptyset$, then
$C_*(\Delta_E,\Lambda_E)=0$.
\end{lemma}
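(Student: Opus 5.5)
The plan is to unwind the definitions of $\Delta_E$ and $\Lambda_E$ as intersections over $e\in E$, handling the two complexes separately and then the degenerate case.

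\emph{The complex $\Delta_E$.} By definition each $\Delta_e$ is the full simplex on the set $V_e$ of triples of $[\ell]$ meeting $e$. An intersection of full simplices is again a full simplex, namely the one on the intersection of the vertex sets. So $\Delta_E$ is the full simplex on $\bigcap_{e\in E}V_e$. A triple $v$ lies in every $V_e$ exactly when $v\cap e\neq\emptyset$ for all $e\in E$, which is the definition of $C_3(G)$.

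\emph{The complex $\Lambda_E$.} Since each $\Lambda_e=L_\ell\cap\Delta_e$, we have
\[
\Lambda_E=\bigcap_{e\in E}(L_\ell\cap\Delta_e)=L_\ell\cap\Delta_E .
\]
By Lemma~\ref{lem:Lm-1} (with $m-1=\ell$), $L_\ell$ consists of the pairwise intersecting families of triples with $\tau\le2$. Now take a simplex $\sigma$ of $\Delta_E$, i.e. any subset of $C_3(G)$. Pick an edge $e\in E$, which is possible because $E\neq\emptyset$. Every member of $\sigma$ meets $e$, so $e$ is a transversal of size two and $\tau(\sigma)\le2$ holds automatically. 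Hence $\sigma\in\Lambda_E$ if and only if its members pairwise intersect. This is exactly the flag (clique) complex on $C_3(G)$ of the intersection graph.

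The one point to check is the empty simplex. The empty family should count as a face of $L_\ell$; this is consistent with the paper's convention of including the empty face in every complex.

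\emph{The degenerate case.} If $C_3(G)=\emptyset$, both $\Delta_E$ and $\Lambda_E$ consist only of the empty face. The relative chain complex $C_*(\Delta_E,\Lambda_E)=C_*(\Delta_E)/C_*(\Lambda_E)$ is then zero, whether or not one uses augmented chains, since the two complexes coincide.

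There is no real obstacle here. The only subtle step is noticing that $E\neq\emptyset$ is what makes the condition $\tau\le 2$ automatic.
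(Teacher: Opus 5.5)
Your proof is correct and follows the paper's approach, which states the lemma as a direct consequence of the definitions: $\Delta_E$ is the full simplex on $\bigcap_{e\in E}V_e=C_3(G)$, and $\Lambda_E=L_\ell\cap\Delta_E$, where $\tau\le 2$ holds automatically because any $e\in E$ is a two-element transversal. Your handling of the empty face and of the degenerate case $C_3(G)=\emptyset$ is also fine.
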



Next we will classify the homological properties of the pair $(\Delta_E, \Lambda_E)$ using the graph $G$ determined by the index set $E$. 

\begin{lemma}[Classification]\label{lem:classification}
Let $E\neq\emptyset$ with associated graph $G$ with vertex set $[\ell]$. Suppose that
$H_q(\Delta_E,\Lambda_E;\Q)\neq0$ for at least one $q$. 

Then $G$ is supported on
a unique six-element set $S\subset [\ell]$ such that $\tau(G)=3$ and every three-cover of $G$
is independent. Furthermore  $\Lambda_E$ is the boundary complex of a
$t$-dimensional cross-polytope for some $t\ge 1$. Hence, 
\begin{equation}\label{eq:localrel}
  H_r(\Delta_E,\Lambda_E;\Q)\cong
  \begin{cases}\Q,& r=t,\\ 0,& r\neq t. \end{cases}
\end{equation}

 The induced graph $G|_S$, the value of $t$ and the number of edges are
exactly as in Table~\ref{tab:classification}; see also
Figure~\ref{fig:classification}. Conversely, each graph listed there satisfies
\eqref{eq:localrel} with the stated $t$.
\end{lemma}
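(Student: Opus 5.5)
The plan is to reduce everything to the topology of the flag complex $\Lambda_E$, using Lemma~\ref{lem:DeltaLambda}. If $C_3(G)=\emptyset$ the relative chains vanish, so we may assume $C_3(G)\neq\emptyset$. Then $\Delta_E$ is a full simplex, hence contractible, and the long exact sequence \eqref{eq:les} gives
\[
H_r(\Delta_E,\Lambda_E;\Q)\cong\tH_{r-1}(\Lambda_E;\Q).
\]
So the hypothesis says exactly that $\Lambda_E$ is not $\Q$-acyclic. In particular $\Lambda_E$ is not a simplex, so some two three-covers $v,w$ of $G$ are disjoint.

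The first structural observation will drive everything. If $v,w\in C_3(G)$ are disjoint, every edge of $G$ meets both $v$ and $w$, so every edge joins $v$ to $w$ and $\supp(G)\subseteq v\cup w$. Hence $|\supp(G)|\le6$. If $|\supp(G)|\ge7$, all three-covers pairwise intersect, $\Lambda_E$ is a simplex, and the relative homology vanishes.

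The \emph{sufficiency direction} (the cross-polytope statement) I would prove next. Assume $S=\supp(G)$ has six elements, $\tau(G)=3$, and every three-cover is independent. Any three-cover $v$ must lie inside $S$: a cover using a point outside $S$ would leave at most two points to cover $G$, contradicting $\tau=3$. Since $v$ is independent and meets every edge, every edge joins $v$ to $S\setminus v$. Thus $S\setminus v$ is also a three-cover, disjoint from $v$. Moreover, two distinct $3$-subsets of a $6$-set that are not complementary must intersect. So the three-covers split into $t$ antipodal pairs $\{v,S\setminus v\}$, and a family spans a face of $\Lambda_E$ iff it contains at most one member of each pair. Hence $\Lambda_E=(S^0)^{*t}=\partial C_t\cong S^{t-1}$ by \eqref{eq:crosspoly}, which gives \eqref{eq:localrel}. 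The table is then a finite enumeration. Such $G$ are bipartite graphs on $S$ with a perfect matching (König, $\tau=\nu=3$), no isolated vertex in $S$, and no three-cover containing an edge. For each such graph one counts the complementary pairs of covers to read off $t$. Up to isomorphism this should leave exactly $K_{3,3}\setminus M$, $K_2\sqcup C_4$ and $3K_2$.

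The \emph{necessity direction} is the main obstacle. We must show that $\Lambda_E$ is contractible, or at least $\Q$-acyclic, whenever $|\supp(G)|\le6$ but one of the conditions fails: $|\supp(G)|<6$, or $\tau(G)\le2$, or some three-cover contains an edge. My first tool will be vertex folding (strong collapses) in the flag complex $\Lambda_E$. If the closed neighbourhood of a three-cover $u$ is contained in that of another $u'$, delete $u$ without changing the homotopy type. I would aim to fold down to a cone.

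Concretely, fix a disjoint pair $v,w$ with $\supp(G)\subseteq v\cup w$. In each failing case I look for an element $x$ such that three-covers can be "pushed" toward containing $x$:
\begin{itemize}
\item If some $x\in v\cup w$ lies outside $\supp(G)$, replacing a point of a cover by $x$ keeps it a cover.
\item If $\tau\le2$ with minimum cover $P$, the covers $P\cup\{y\}$ behave like a large simplex dominating others.
\item If a cover contains an edge, then the pair $(v,w)$ is unbalanced.
\end{itemize}
I expect to exhibit, in each case, either a cone vertex of $\Lambda_E$ after folding, or a decomposition of $\Lambda_E$ as a star cluster in the sense of Barmak~\cite{barmak2013star}, which is contractible. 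The delicate part is organizing this into few enough cases. Support size at most six keeps the check finite; if the case analysis resists, the fallback is to enumerate all graphs with support at most six up to isomorphism and verify acyclicity of each $\Lambda_E$ directly. Uniqueness of $S$ is automatic, since $S=\supp(G)$.
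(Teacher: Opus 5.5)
\paragraph{Where the gap is.} Your reduction $H_r(\Delta_E,\Lambda_E;\Q)\cong\tH_{r-1}(\Lambda_E;\Q)$ is correct and matches Steps~1 and~3 of the paper. So is the observation that two disjoint three-covers $v,w$ force every edge to join $v$ to $w$, and so is your cross-polytope argument under the three conditions. The genuine gap is the necessity direction. You leave it as a plan (folding, star clusters, or enumeration) rather than an argument.

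\paragraph{The missing idea.} It is a one-line cone argument. If some three-cover $C$ contains an edge $f$ of $G$, then every three-cover meets $f\subseteq C$. So $C$ is adjacent to every vertex of the flag complex $\Lambda_E$, which makes $\Lambda_E$ a cone, and the relative homology vanishes.
\begin{itemize}
\item This settles the independence condition directly.
\item It also settles $\tau(G)\le2$. For a cover $P$ with $|P|\le2$ and any edge $f$, the set $P\cup f$ is a cover with at most three elements, since $P\cap f\neq\emptyset$. Padding it to a $3$-set $C\supseteq f$ gives a three-cover containing an edge.
\item The case $|\supp(G)|<6$ reduces to the previous one. Since $G$ is bipartite between $v$ and $w$, a matching of size three would need six support vertices. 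Hence $\nu(G)\le2$, and K\"onig gives $\tau(G)\le2$.
\end{itemize}

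\paragraph{Problems with your fallback plans.}
\begin{itemize}
\item Your first bullet does not hold in general. Replacing a point of a cover by a vertex outside $\supp(G)$ usually destroys the cover property.
\item Your third bullet (``the pair $(v,w)$ is unbalanced'') does not lead to an argument.
\item The enumeration fallback is not a finite check. When $\tau(G)\le2$, the three-covers include the sets $P\cup\{y\}$ with $y$ ranging over all of $[\ell]$, so $\Lambda_E$ depends on $\ell$.
\end{itemize}

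\paragraph{The table is asserted, not proved.} You write that the enumeration ``should leave exactly'' the three types, but you give no proof. An argument is short.
\begin{itemize}
\item Suppose $G|_S$ is connected. Its bipartition is $\{v,w\}$, with both parts of size three.
\begin{itemize}
\item A vertex $a\in v$ of degree one, with neighbour $b$, gives a contradiction. By connectivity $b$ has another neighbour $a'$, and $(v\setminus\{a\})\cup\{b\}$ is a three-cover containing the edge $ba'$.
\item So all degrees are at least two, and $K_{v,w}\setminus G|_S$ is a matching.
\end{itemize}
\item Suppose $G|_S$ is disconnected.
\begin{itemize}
\item The perfect matching splits among the components, so each component has even order. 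The only options are $4+2$ and $2+2+2$.
\item A four-vertex component with a perfect matching is $P_4$ or $C_4$.
\item $P_4$ is excluded, because its middle edge together with an endpoint of the $K_2$ is a three-cover containing an edge.
\end{itemize}
\end{itemize}
Only then does counting complementary pairs give the values of $t$ in the table.
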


\begin{table}[ht]
\caption{The graphs with nonvanishing relative homology.}
\label{tab:classification}
\begin{tabular}{lcc}
\toprule
$G|_S$ & $t$ & $|E|$\\
\midrule
$K_{3,3}\setminus M$, $M$ a matching, $0\le|M|\le3$ & $1$ & $9-|M|$\\
$K_2\sqcup C_4$ & $2$ & $5$\\
$3K_2$ & $4$ & $3$\\
\bottomrule
\end{tabular}
\end{table}

\begin{figure}[ht]
\centering
\begin{tikzpicture}[x=1mm,y=1mm,
  v/.style={circle,draw,fill=black,inner sep=0pt,minimum size=1.5mm},
  e/.style={draw,line width=0.3pt},
  d/.style={draw,line width=0.3pt,densely dashed}]

\node at (10,26) {\footnotesize $G$};
\node at (46,26) {\footnotesize three-covers};
\node at (80,26) {\footnotesize $\Lambda_E$};

\foreach \y in {0,8,16} {\node[v] (a\y) at (0,\y) {}; \node[v] (b\y) at (20,\y) {};}
\draw[e] (a0)--(b8) (a0)--(b16) (a8)--(b0) (a8)--(b16) (a16)--(b0) (a16)--(b8);
\draw[d] (a0)--(b0) (a8)--(b8) (a16)--(b16);
\node at (-5,8) {\footnotesize $A$}; \node at (25,8) {\footnotesize $B$};
\node at (46,10) {\footnotesize $\{A,B\}$}; \node at (46,4) {\footnotesize $t=1$};
\node[v] at (74,8) {}; \node[v] at (86,8) {};
\node at (80,0) {\footnotesize $S^{0}$};

\begin{scope}[yshift=-34mm]
\node[v] (p0) at (0,14) {}; \node[v] (p1) at (0,4) {};
\draw[e] (p0)--(p1);
\node[v] (c1) at (12,14) {}; \node[v] (c2) at (24,14) {};
\node[v] (c3) at (24,4) {}; \node[v] (c4) at (12,4) {};
\draw[e] (c1)--(c2)--(c3)--(c4)--(c1);
\node at (46,10) {\footnotesize $4$ three-covers}; \node at (46,4) {\footnotesize $t=2$};
\node[v] (q1) at (80,16) {}; \node[v] (q2) at (90,9) {};
\node[v] (q3) at (80,2) {}; \node[v] (q4) at (70,9) {};
\draw[e] (q1)--(q2)--(q3)--(q4)--(q1);
\node at (80,-4) {\footnotesize $S^{1}$};
\end{scope}

\begin{scope}[yshift=-68mm]
\foreach \y in {0,8,16} {\node[v] (m\y) at (0,\y) {}; \node[v] (n\y) at (20,\y) {};
  \draw[e] (m\y)--(n\y);}
\node at (46,10) {\footnotesize $8$ three-covers}; \node at (46,4) {\footnotesize $t=4$};
\node[v] (r1) at (80,18) {}; \node[v] (r2) at (80,0) {};
\node[v] (r3) at (71,14) {}; \node[v] (r4) at (89,4) {};
\node[v] (r5) at (69,9) {}; \node[v] (r6) at (91,9) {};
\node[v] (r7) at (71,4) {}; \node[v] (r8) at (89,14) {};
\draw[d] (r1)--(r2) (r3)--(r4) (r5)--(r6) (r7)--(r8);
\node at (80,-6) {\footnotesize $S^{3}$};
\end{scope}
\end{tikzpicture}
\caption{The three classes of graphs of Lemma~\ref{prop:classification} and the associated simplicial 
complexes of three-covers. Dashed edges in the first graph are the removed
matching $M$; dashed segments in the last complex join the four complementary
pairs of covers, which are exactly the non-edges, so that $\Lambda_E$ is a join
of four copies of $S^0$. In each case $\Lambda_E$ is a join of $t$ copies of
$S^0$, hence the boundary of a $t$-dimensional cross-polytope.}
\label{fig:classification}
\end{figure}

\begin{proof}
We will divide the proof into five steps and step 5 is devoted to verify the converse direction. 

\emph{Step 1. We'll identify two disjoint three-covers and a six-point support of the graph $G$.}
If every two three-covers intersect, 
by Lemma~\ref{lem:DeltaLambda} the complex $\Lambda_E$ is then the full simplex
on $C_3(G)$, i.e.,  $\Lambda_E=\Delta_E$, which means all relative homology
vanishes. This contradiction shows that there are disjoint three-covers $A$ and $B$. Then the union
$S=A\sqcup B$ has six elements and every $e\in E$ meets both $A$ and $B$. Thus $E\subseteq E(K_{A,B})$, so $G$
is bipartite with all its nonisolated vertices in $S$.

\emph{Step 2. We'll show that the covering number of $G$ is three and any three-cover of $G$ is independent.}
Since $A$ is a three-cover, $\tau(G)\le3$. Suppose that there is a vertex cover $P$ with
$|P|\le2$.  Pick $f\in E$. Then $P\cap f\neq\emptyset$, so
$|P\cup f|\le3$. Expand $P\cup f$ to a three-element set $C$ which still
a vertex cover with $f\subset C$. Then any other three-cover $C'$ meets
$C$. Since
$\Lambda_E$ is a clique complex, it is a nonempty cone with apex $C$, hence
contractible. Both $\Delta_E$ and $\Lambda_E$ being contractible and nonempty,
the long exact sequence of the pair gives $H_q(\Delta_E,\Lambda_E)=0$ for any $q$ which a
contradiction. Therefore $\tau(G)=3$. Similar argument shows that every three-cover is independent.

\emph{Step 3. We'll show that the simplicial complexes on the three-covers are determined by the complementary pairs and so is the relative homology degree.}
By K\"onig's theorem $\nu(G)=\tau(G)=3$, so $G$ contains a matching
$\{e_1,e_2,e_3\}$. Hence $S=e_1\cup e_2\cup e_3$ and therefore, $\supp(G)=S$, which shows that the support of $G$ is unique. Let $C$ be a three-cover. As $C$ is independent by Step~2, every edge has an
endpoint in $S\setminus C$.  So $S\setminus C$ is a three-cover and hence  is independent as
well. The three-covers therefore occur in complementary pairs
$\{C,S\setminus C\}$. Two three-subsets of the six-set $S$ are disjoint exactly
when they are complementary. Therefore the complex  $\Lambda_E$ is the flag complex of a complete
graph on $2t$ three-covers with complementary pair being nonadjacent, here $t$ is the number
of complementary pairs of three-covers. By \eqref{eq:crosspoly},
\[
  \Lambda_E=(S^0)^{*t}=\partial C_t\cong S^{t-1}. 
\]
 Since $\Delta_E$ is contractible, the long exact sequence of the pair
gives $H_r(\Delta_E,\Lambda_E;\Q)\cong\tH_{r-1}(\Lambda_E;\Q)$, which is
\eqref{eq:localrel}. In particular only when the degree $r=t$, the relative homology is nontrivial. 

\emph{Step 4. We'll give the classification of the induced graph $G|_S$.} First we assume that $G|_S$ is connected. Then it is straightforward to show that its bipartition $A\mid B$ is
unique. We claim the degree of each vertex is $\geq 2$. Suppose not. Pick a vertex $a\in A$ had degree one, with unique neighbour $b\in B$. Since $G|_S$ is 
connected, $b$ has another  neighbour $a'\in A$ of $b$. Then
$C=(A\setminus\{a\})\cup\{b\}$ is a vertex cover. But $C$ is not independent which contradicts the result in Step~2. Hence
every vertex in $G|_S$ has degree at least two. Let $M=K_{A, B}\setminus G|_S$. As $|A|=|B|=3$, $M$ forms a
matching in $K_{A,B}$ and  $0\le|M|\le3$.

Now suppose $G|_S$ is disconnected. Recall that the set $S$ is the union of the perfect matching $e_1, e_2, e_3$ of Step~3. So any connected component has a perfect matching being a subset of $\{e_1, e_2, e_3\}$, and hence every component has even order. Since $G|_S$ is disconnected, the only possible distribution of vertex into components are $4+2$ (two connected components with one having 4 vertices and the other having 2) and $2+2+2$ (three connected components with each having 2 vertices). In the
first case the two-vertex component is $K_2$, and the four-vertex component is a
connected bipartite graph with a perfect matching, hence  is the path $P_4$ or the cycle $C_4$. It
cannot be $P_4$. Otherwise there is a three-cover being non-independent contradicting the result in Step~2.
Thus $G|_S=K_2\sqcup C_4$. In the second case every component is $K_2$, so
$G|_S=3K_2$.

\emph{Step 5. We'll show that each graph in Table~\ref{tab:classification} satisfies
\eqref{eq:localrel} with the stated $t$.}
Let $G|_S=K_{A,B}\setminus M$ with $M$ a matching. Then it is straightforward to verify that any three subset $C$ of $S$ different from $A$ and $B$ can't be a three-cover.  Hence the only three-covers, $A$ and $B$, form a single complementary pair, and $t=1$. For $G=K_2\sqcup C_4$, a three-cover consists of one endpoint
of the $K_2$ together with one of the two opposite pairs of $C_4$. Hence there are four
three-covers in two complementary pairs, so $t=2$. For $G=3K_2$, a
three-cover consists of one endpoint of each edge. Hence there are $2^3=8$ three-covers in
four complementary pairs, and so $t=4$. In each case the three-covers
are independent and \eqref{eq:localrel} holds by Step~3.
\end{proof}

For any subset $S\subseteq[\ell]$ we write $D_S$ and $L_S$ for the complexes
defined by \eqref{eq:DN} and \eqref{eq:LNdesc} with ground set $S$.  Naturally,  $(D_S,L_S)\hookrightarrow(D_N,L_N)$ for any such $S$. Next we'll discuss the local spectral sequences on $(D_S,L_S)$ for $S$ having size $6$. 

\begin{lemma}[The local $E^2$-page]\label{lem:localE2}
Let $S$ be a subset of $[\ell]$ with size $6$. On the $E^2$-page of \eqref{eq:ssDL} for the pair
$(D_S,L_S)$,  the only nontrivial terms are
\begin{equation}\label{eq:E2}
  E^2_{6,1}\cong\Q^{40},\qquad E^2_{4,2}\cong\Q^{45},\qquad
  E^2_{2,4}\cong\Q^{15}.
\end{equation}
In particular $H_i(D_S,L_S;\Q)=0$ for $i\notin\{6,7\}$, and
\begin{equation}\label{eq:localeuler}
  \chi(D_S,L_S)=(45+15)-40=20 .
\end{equation}
\end{lemma}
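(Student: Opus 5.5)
The plan is to read off the $E^1$-page of \eqref{eq:ssDL} for the pair $(D_S,L_S)$ directly from Lemma~\ref{lem:classification}. There the index sets $E$ are nonempty sets of pairs of $S$. A summand $H_q(\Delta_E,\Lambda_E;\Q)$ is nonzero only if the graph $G$ with edge set $E$ is one of the graphs in Table~\ref{tab:classification}, and it is then $\Q$ in the single degree $q=t$, with $p=|E|-1$. Since $|S|=6$, every such graph is supported on all of $S$. I would count each type.

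\begin{itemize}
\item \emph{Type $3K_2$.} These are the $15$ perfect matchings of $S$, giving $E^1_{2,4}\cong\Q^{15}$.
\item \emph{Type $K_2\sqcup C_4$.} There are $15$ choices of the $K_2$ and $3$ four-cycles on the remaining four points, giving $E^1_{4,2}\cong\Q^{45}$.
\item \emph{Type $K_{3,3}\setminus M$.} There are $10$ unordered bipartitions $S=A\sqcup B$. For each, $M$ runs over the matchings of $K_{A,B}$, and there are $1,9,18,6$ of sizes $0,1,2,3$. Here $q=1$ and $p=8-|M|$, so row $q=1$ has dimensions $10,90,180,60$ in columns $p=8,7,6,5$.
\end{itemize}

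Since $d^1$ preserves $q$, rows $2$ and $4$ each occupy a single column. Hence $E^2=E^1$ there, which gives $E^2_{4,2}\cong\Q^{45}$ and $E^2_{2,4}\cong\Q^{15}$.

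Row $q=1$ requires computing $d^1$. A component $d^1_{E\setminus\{e\},E}$ can be nonzero only when both $E$ and $E\setminus\{e\}$ are of the first type. Removing an edge from $K_{A,B}\setminus M$ gives $K_{A,B}\setminus(M\cup\{e\})$. This is of the first type, and with the same bipartition, exactly when $M\cup\{e\}$ is still a matching; otherwise the target summand is zero. In that case the three-covers are $\{A,B\}$ in both graphs, so $\Lambda_E=\Lambda_{E\setminus\{e\}}=S^0$ and $\Delta_E=\Delta_{E\setminus\{e\}}$ is the full simplex on these two vertices. The inclusion $\iota$ is therefore the identity of pairs, and $(\iota)_*$ is an isomorphism $\Q\to\Q$.

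Consequently row $1$ splits as a direct sum over the $10$ bipartitions. Each summand is, up to the signs in \eqref{eq:d1}, the augmented simplicial cochain complex of the chessboard complex $\Mat_{3,3}$. In this identification the matching $M$ with $|M|=j$ sits in cochain degree $j-1$ (the empty matching is the augmentation), and the differential adds an edge to $M$. I would check that the signs $(-1)^i$ coming from a fixed total order on $\binom{S}{2}$ agree, after rescaling each basis element by a sign, with the usual coboundary signs. Both are determined by positions of edges in a fixed order, so this should be a routine sign twist.

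The main input is then that $\widetilde H^*(\Mat_{3,3};\Q)$ is concentrated in degree $1$, where it has rank $4$. The complex $\Mat_{3,3}$ is connected, and its reduced Euler characteristic is $-1+9-18+6=-4$. It remains to show $\widetilde H^2=0$, i.e.\ that the coboundary from $1$-cochains onto the six $2$-simplices is surjective. I would verify this directly by exhibiting that $\Mat_{3,3}$ collapses to a graph, or else quote the standard fact that the $3\times3$ chessboard complex is homotopy equivalent to a wedge of four circles. I expect this identification, together with the sign bookkeeping, to be the main obstacle.

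Granting it, row $1$ has homology only at $|M|=2$, i.e.\ $p=6$, of dimension $4\cdot10=40$, so $E^2_{6,1}\cong\Q^{40}$. All other $E^2$ terms vanish. The surviving terms lie in total degrees $7$, $6$, $6$, so \eqref{eq:dimsum} gives $H_i(D_S,L_S;\Q)=0$ for $i\notin\{6,7\}$. Applying \eqref{eq:eulerpage} with $r=2$ gives $\chi(D_S,L_S)=45+15-40=20$, which is \eqref{eq:localeuler}.
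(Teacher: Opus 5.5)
Your proposal is correct and follows essentially the paper's route. You use the same count of the $E^1$-page, the same analysis of the $d^1$ components, and the same splitting of row $q=1$ over the ten bipartitions. You then identify each bipartition's row with the augmented cochain complex of $\Mat_{3,3}$, and your sign twist by $(-1)^{\sum_{m\in M}\mathrm{pos}(m)}$ does work. The paper instead checks the three needed vanishings by hand, using only which coefficients are nonzero: injectivity at $p=8$, a one-dimensional kernel at $p=7$, and surjectivity onto $p=5$ from the unique extension of a two-edge matching. These are exactly the nonemptiness, connectivity and free-edge facts you would use for $\Mat_{3,3}$.
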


\begin{proof} Let $E\subset\binom{S}{2}$ be an index set with $|E|=p+1$. On the $E^1$-page of the spectral sequence, the contribution of $E$ is $H_q(\Delta_E,\Lambda_E;\Q)$ on column $p$ and row $q$. Then by Lemma~\ref{lem:classification} this homology is trivial 
unless the associated graph $G$ of $E$ is one of the three types of
Table~\ref{tab:classification}, in which case $H_q(\Delta_E,\Lambda_E;\Q)$ is $\Q$ for $q=t$ and trivial otherwise. 

\emph{Step 1. We'll determine the dimension of the terms, $E_{p, 1}^1$, in the row $q=1$ listed by bipartition.} By Lemma~\ref{lem:classification}, a graph of the first type in Table~\ref{tab:classification} contribute nontrivailly in the corresponding terms. 
Let $G_M=K_{A,B}\setminus M$ be a graph of first type where $A|B$ is an unordered bipartition of $S$ with $|A|=|B|=3$ and $M$ is a matching of $K_{A,B}$. Denote the edge of $G_M$ to be $E(G_M)$ There are $\tfrac12\binom63=10$
bipartitions. If $|M|=r$ then $G_M$ has $9-r$ edges, so its homology $H_1(\Delta_E,\Lambda_E;\Q)$ sits in
column
\begin{equation}\label{eq:column}
  p=|E(G_M)|-1=8-r ,
\end{equation}
and $r$ ranges over $0,1,2,3$. A matching of size $r$ in $K_{A,B}$ is determined
by the following formula, 
\begin{equation}\label{eq:matchcount}
  \binom3r^{\!2}r!\;=\;1,\ 9,\ 18,\ 6\qquad(r=0,1,2,3)
\end{equation}
matchings of size $r$. Hence the row $q=1$ of the $E^1$-page has dimensions
\[
  \dim E^1_{8,1}=10,\qquad \dim E^1_{7,1}=90,\qquad
  \dim E^1_{6,1}=180,\qquad \dim E^1_{5,1}=60,
\]
and is trivial in all other columns.

\emph{Step 2. We'll discuss the components of $d^1$.} For any matching $M$ of $K_{A, B}$,  by Step~5 of
Lemma~\ref{lem:classification} the three-covers of every $G_M=K_{A, B}\setminus M$ are  the complimentary pair $A$
and $B$. Then by Lemma~\ref{lem:DeltaLambda} the pair
$(\Delta_{E(G_M)},\Lambda_{E(G_M)})$ is the $1$-simplex on $\{A,B\}$ relative to
its two endpoints. Its homology $H_1 (\Delta_{E(G_M)},\Lambda_{E(G_M)})\cong\Q$ is generated by
the oriented edge from $A$ to $B$. We write $u_M$ for this class in the
summand with index $E(G_M)$. The bipartition is unordered, so we fix once and
for all, for each bipartition, which triple is called $A$.
 
Consider the component of the differential $d^1$ that deletes an edge $e\in E(G_M)$. It preserves homology dimension
$q$, so its target in $E^1$-page is $H_1(\Delta_{E'},\Lambda_{E'};\Q)$ for the graph
$G'=K_{A,B}\setminus(M\cup\{e\})$ with edge set $E'=E(G_M)\setminus\{e\}$. By
Lemma~\ref{lem:classification} this target is nonzero exactly when $G'$ is the first type in Table~\ref{tab:classification}, i.e., when $G'=K_{A',B'}\setminus M'$ for some bipartition
$A'\mid B'$ and matching $M'$.
\begin{itemize}
\item  If $M\cup\{e\}$ is a matching, then the three-covers of $G'=G_{M\cup\{e\}}$ are again $A$ and $B$ by Step 5 of Lemma~\ref{lem:classification}. Hence $(\Delta_{E'},\Lambda_{E'})=(\Delta_{E(G_M)},\Lambda_{E(G_M)})$, the inclusion inducing the component is the identity. And the component maps $u_M$ to $(-1)^{i(e)}u_{M\cup\{e\}}$, where $i(e)$ is the position of $e$ in $E(G_M)$ with respect to the fixed order on $\binom S2$, as in \eqref{eq:d1}.
\item If $M\cup\{e\}$ is not a matching, then the degree of some vertex in $G'$ is at most $1$. Hence $G'$ is not a graph of type 1 ($t=1$) in Table~\ref{tab:classification}. Therefore $H_1(\Delta_{E'},\Lambda_{E'}; \Q)$ is trivial and so is the component of the differential $d^1$.
\end{itemize}

As a summary, in row $q=1$, the only nontrivial columns are the ones $p=5, 6, 7, 8$ each of which can decompose into the direct sum of components determined by bipartitions. And there are ten such bipartitions. By the discussion above, the differential $d^1$ preserves these decompositions. 

\emph{Step 3. We'll find the $E^2$-page of the spectral sequence in the row $q=1$, $E^2_{p, 1}$.} By the discussion in Step 2,  it suffices
to treat one bipartition, whose corresponding component in the direct sum  has dimensions $1,9,18,6$ in columns
$p =8,7,6,5$ by \eqref{eq:matchcount}. By Step~2 its differential sends $u_M$ to
$\sum\pm u_{M\cup\{e\}}$, the sum over the edges $e$ with $M\cup\{e\}$ a
matching, every coefficient being $\pm1$. In column~$8$, the match $M=\emptyset$ and $d^1u_\emptyset$ has
nine nonzero coefficients, so $d^1$ is injective from $E^1_{8, 1}$ to $E^1_{7, 1}$. This implies that the homology is trivial at column $8$.  In column~$5$, a perfect
matching $P$ and an edge $f\in P$ give $d^1u_{P\setminus\{f\}}=\pm u_P$. Because a
two-edge matching of $K_{3,3}$ extends to exactly one perfect matching, $d^1$
maps from $E_{6, 1}^1$ onto $E_{5, 1}^1$ which implies that the homology is trivial at column~$5$.  In column~$7$, the rank of the differential $d^1:E_{8, 1}^1\mapsto E_{7, 1}^1$ is $1$. And now suppose $d^1\sum_e\lambda_eu_{\{e\}}=0$.  Then
comparing coefficients of $u_{\{e,e'\}}$ gives $\lambda_{e'}=\pm\lambda_e$
whenever $e$ and $e'$ are disjoint. As two edges sharing a vertex are both
disjoint from some third edge, all the coefficients  $\lambda_e$ are determined by any one of
them. So the kernel of $d^1 :E_{7, 1}^1\mapsto E_{6, 1}^1$ is also one-dimensional. The homology therefore vanishes also in $7$. Since taking homology preserves the alternating sum of dimensions,
the dimension in only nontrivial column $6$ is $1-9+18-6=4$. Taking direct sum over the ten bipartitions,
$E^2_{6,1}\cong\Q^{40}$ and all other entries of the row $q=1$ are trivial. Note that the proof uses only which coefficients are nonzero, not their signs, and so holds
over any field.

\emph{Step 4. We'll find the $E^2$-page of the spectral sequence in the rows $q=2$ and $q=4$.} For q=2, the graphs $K_2\sqcup C_4$ on $S$ are
obtained by choosing the two vertices of the $K_2$ component in $\binom62$ ways
and one of the three four-cycles on the remaining four labelled vertices. And this gives
$\binom62\cdot3=45$  such graphs. Each has five edges and hence lies in column $p=4$.
By Table~\ref{tab:classification} no admissible graph with $t=2$ has 
number of edges other than $5$ , so all the other columns except $4$ of this row in the first page are trivial.  As $d^1$ preserves the row number $q$,  $E^2_{4,2}\cong\Q^{45}$.

Similarly for $q=4$, the graphs $3K_2$ on $S$ are the perfect matchings of the complete
graph on six vertices. And there are $6!/(2^3\,3!)=15$ of them, each with three edges and so in
column $p=2$. All the other columns  in the row $q=4$ vanish, and hence
$E^2_{2,4}\cong\Q^{15}$.

\emph{Step 5. Lastly, we assemble the $E^2$-page.} The  total degrees of the nontrivial terms in the $E^2$-page are $6+1=7$, $4+2=6$ and $2+4=6$, so by
\eqref{eq:dimsum} the relative homology vanishes outside degrees $6$ and $7$,
and \eqref{eq:eulerpage} gives
$\chi(D_S,L_S)=(-1)^{7}\cdot40+(-1)^{6}\cdot45+(-1)^{6}\cdot15=20$.
\end{proof}

\begin{figure}[ht]
\centering
\begin{tikzpicture}[x=1mm,y=1mm,
  n/.style={draw,rounded corners=2pt,minimum width=9mm,minimum height=6mm,
            font=\footnotesize,line width=0.3pt},
  a/.style={->,>=stealth,line width=0.3pt}]
\draw[->,>=stealth,line width=0.3pt] (-2,0) -- (62,0) node[right,font=\footnotesize] {$p$};
\draw[->,>=stealth,line width=0.3pt] (0,-2) -- (0,38) node[above,font=\footnotesize] {$q$};
\foreach \i/\l in {0/0,1/1,2/2,3/3,4/4,5/5,6/6,7/7,8/8}
  \node[font=\scriptsize] at (\i*7,-3.5) {\l};
\foreach \j/\l in {0/0,1/1,2/2,3/3,4/4}
  \node[font=\scriptsize] at (-3.5,\j*8) {\l};
\node[n] (a) at (42,8) {$40$};
\node[n] (b) at (28,16) {$45$};
\node[n] (c) at (14,32) {$15$};
\draw[a] (a) -- (b);

\node[font=\scriptsize] at (37,14) {$d^2$};
\draw[a] (a.north) to[out=90,in=0]
  node[pos=0.5,above right,font=\scriptsize,inner sep=1pt] {$d^4$} (c.east);
\end{tikzpicture}
\caption{The local $E^2$-page of Lemma~\ref{lem:localE2}. Only three
entries are nonzero: $\Q^{40}$ at $(6,1)$, which lies in total degree seven, and
$\Q^{45}$ at $(4,2)$ and $\Q^{15}$ at $(2,4)$, both in total degree six. Only
two differentials can act between them, and their ranks $a$ and $b$ are not
determined by the spectral sequence: every page has the same Euler
characteristic $20$, so $a+b$ is pinned only by the independent computation of
Lemma~\ref{lem:localrel}.}
\label{fig:E2}
\end{figure}

Only two differentials can act between the three surviving positions in the second page of the spectral sequence as in Figure~\ref{fig:E2}, namely
$d^2\colon E^2_{6,1}\to E^2_{4,2}$ and $d^4\colon E^4_{6,1}\to E^4_{2,4}$. Hence all
$d^3$ vanish, so $E^4=E^3$, and $E^\infty=E^5$. Writing $a=\operatorname{rank}
d^2$ and $b=\operatorname{rank}d^4$ one gets
\[
  \dim H_7(D_S,L_S)=40-a-b,\qquad \dim H_6(D_S,L_S)=60-a-b,
\]
which exhibits the invariant difference $20$ of \eqref{eq:localeuler}.
Lemma~\ref{lem:localrel} below shows that $\dim H_7=9$, hence $a+b=31$;
the individual ranks are not determined here, and no claim of collapse at $E^2$
is made.

\begin{lemma}[Six-set decomposition]\label{lem:sixset}
For every $\ell\ge6$ the inclusions of pairs induce an isomorphism
\begin{equation}\label{eq:sixset}
  \bigoplus_{S\in\binom{[\ell]}{6}}H_i(D_S,L_S;\Q)
  \ \xrightarrow{\ \cong\ }\ H_i(D_\ell,L_\ell;\Q)
\end{equation}
for every $i$.
\end{lemma}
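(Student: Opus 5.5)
Our plan is to compare the spectral sequence \eqref{eq:ssDL} for $(D_\ell,L_\ell)$, coming from the cover by the pairs $(\Delta_e,\Lambda_e)$, $e\in\binom{[\ell]}{2}$, with the direct sum over $S\in\binom{[\ell]}{6}$ of the analogous spectral sequences for $(D_S,L_S)$, covered by $(\Delta^S_e,\Lambda^S_e)$, $e\in\binom S2$. We then show that the map of spectral sequences induced by the inclusions is an isomorphism already on $E^1$, and conclude with the comparison theorem quoted after Lemma~\ref{lem:ss}.

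First we check that we really have a morphism of filtered double complexes. For $e\in\binom S2$ the triples of $S$ meeting $e$ lie among the triples of $[\ell]$ meeting $e$, and intersecting families of triples of $S$ remain intersecting in $[\ell]$. So $(\Delta^S_E,\Lambda^S_E)\subseteq(\Delta_E,\Lambda_E)$ for every $E\subseteq\binom S2$. Sending the summand $[E;z]$ of the $S$-double complex to the summand $[E;z]$ of the $[\ell]$-double complex commutes with $d_h$ and $d_v$, provided the total order on $\binom S2$ is the one restricted from $\binom{[\ell]}{2}$. Summing over $S$ gives a chain map of double complexes that respects the horizontal filtration. On the abutment it induces exactly the map \eqref{eq:sixset}, because each $S$-piece converges to $H_*(D_S,L_S)$ via the augmentation, and the augmentations are compatible with inclusion.

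Next comes the $E^1$ comparison, which is the heart of the proof. Fix $E\subseteq\binom{[\ell]}{2}$ with $H_*(\Delta_E,\Lambda_E)\ne0$. By Lemma~\ref{lem:classification}, $G$ is supported on a unique six-set $S$, and every three-cover $C$ of $G$ is independent; Step~3 of that proof also gives $C\subseteq S$. Indeed, $C$ must contain one endpoint of each edge of the perfect matching $e_1,e_2,e_3$, so $C\subseteq S$. Hence $C_3(G)$ computed in $[\ell]$ equals $C_3(G)$ computed in $S$, and $(\Delta^S_E,\Lambda^S_E)=(\Delta_E,\Lambda_E)$.

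It remains to handle the other summands, and there are two cases.
\begin{itemize}
\item If $E\subseteq\binom{S'}{2}$ for some six-set $S'$ and $H_*(\Delta_E,\Lambda_E)=0$, we need $H_*(\Delta^{S'}_E,\Lambda^{S'}_E)=0$ too. This holds because Lemma~\ref{lem:classification} applied with ground set $S'$ (the case $\ell=6$) would otherwise force $G$ to be of a listed type. That type would then contribute in $[\ell]$ by the converse in Step~5, and the three-covers argument above shows the $[\ell]$ and $S'$ pairs agree.
\item If $E$ is contained in several six-sets, only the unique support-determined $S$ gives a nonzero local term when the homology is nonzero. For $S'\ne S$ containing $\supp(G)$ we must have $S'=S$, since $|\supp G|=6$.
\end{itemize}
Hence the map on $E^1$ is a direct sum of identities on nonzero summands and zero-to-zero elsewhere, so it is an isomorphism. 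It also commutes with $d^1$, being induced by a map of double complexes.

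The step we expect to be the main obstacle is the bookkeeping of the $E^1$ comparison. We must ensure that a graph $E$ with nonzero local homology in $[\ell]$ is counted exactly once, namely in the unique $S=\supp(G)$, and that the identification $(\Delta^S_E,\Lambda^S_E)=(\Delta_E,\Lambda_E)$ holds at chain level. The subtle point is that in $[\ell]$ a graph supported on fewer than six vertices may have extra three-covers using outside points, and these must not create spurious local contributions. This is exactly what Lemma~\ref{lem:classification} excludes, since such graphs have vanishing relative homology. Once $E^1$ is matched, the comparison theorem yields \eqref{eq:sixset}.
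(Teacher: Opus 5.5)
Your proposal is correct and follows the paper's proof. Both arguments build a morphism from the direct sum of the local spectral sequences to the global one. Both show it is an isomorphism on $E^1$, since every nonzero summand comes from a graph in Lemma~\ref{lem:classification} whose three-covers all lie in its six-element support $S=\supp(G)$, so the local and global pairs coincide. Both then conclude with the comparison theorem. Your added checks (compatibility with the augmentations, and vanishing of local summands whose support is smaller than the six-set) spell out what the paper leaves implicit.
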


\begin{proof}
Fix a total order on $\binom{[N]}{2}$ and give each $\binom{S}{2}$ the induced
order. For each six-set $S$, the inclusion of pairs induces a morphism from the
spectral sequence \eqref{eq:ssDL} of $(D_S,L_S)$ to that of $(D_N,L_N)$, since
the index set $\binom{S}{2}$ is a subset of $\binom{[N]}{2}$ and the horizontal
differentials are given by the same deletion rule. Taking the direct sum over
all six-sets gives a morphism of spectral sequences.

Every nonzero summand, local or global, is indexed by an edge set $E$ whose graph
$G$ is one of the three types of Lemma~\ref{lem:classification}. In particular
$\supp(G)$ has exactly six elements, and $G$ contains three disjoint edges. Hence
each nonzero summand lies in exactly one six-set, namely $S=\supp(G)$, and every
other index set contributes nothing to any of the spectral sequences. Moreover,
every three-cover of $G$ contains an endpoint of each of three disjoint edges and
so lies in $S$; the three-covers computed in $S$ and in $[N]$ therefore coincide,
and the local and global pairs $(\Delta_E,\Lambda_E)$ are equal. The direct sum
of the local maps thus matches the nonzero summands bijectively, by identity
maps, and hence is an isomorphism on the $E^1$-page. Being induced by inclusions of pairs, it is
a morphism of spectral sequences. Since these are finite, the comparison
theorem gives \eqref{eq:sixset}.
\end{proof}

\section{The six-point computation and the comparison maps}\label{sec:sixpoint}

The following elementary comparison is an immediate consequence of the universal coefficient theorem \cite{Hatcher}. The universal coefficient theorem gives that $$\widetilde H_j(K;\mathbb{F}_2)\cong\bigl(\widetilde H_j(K;\mathbb{Z})\otimes\mathbb{F}_2\bigr)\oplus\operatorname{Tor}\bigl(\widetilde H_{j-1}(K;\mathbb{Z}),\mathbb{F}_2\bigr).$$
Together with $\operatorname{rank}\widetilde H_j(K;\mathbb{Z})=\dim_{\mathbb{Q}}\widetilde H_j(K;\mathbb{Q})$, we get the following result. 

\begin{lemma}\label{lem:rankcomparison}
Let $K$ be a finite simplicial complex. Then
$\dim_\Q\tH_j(K;\Q)\le\dim_{\F_2}\tH_j(K;\F_2)$ for every $j$.
\end{lemma}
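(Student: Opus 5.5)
The plan is to derive the inequality directly from the universal coefficient theorem quoted just before the statement, combined with the structure theorem for finitely generated abelian groups. Since $K$ is a finite simplicial complex, its reduced simplicial chain complex over $\Z$ consists of finitely generated free abelian groups. Hence each $\tH_j(K;\Z)$ is finitely generated, and we may write
\[
  \tH_j(K;\Z)\cong\Z^{r_j}\oplus T_j,
\]
with $T_j$ a finite abelian group.

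The first step is to identify $r_j$ with $\dim_\Q\tH_j(K;\Q)$. Since $\Q$ is flat over $\Z$, tensoring with $\Q$ commutes with homology. Alternatively, one can apply the universal coefficient theorem with $\Q$ coefficients and note that $\operatorname{Tor}(-,\Q)=0$ and $T_j\otimes\Q=0$. Either way, $\tH_j(K;\Q)\cong\Q^{r_j}$.

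The second step is the $\F_2$ side. By the displayed universal coefficient formula,
\[
  \tH_j(K;\F_2)\cong(\Z^{r_j}\oplus T_j)\otimes\F_2\;\oplus\;\operatorname{Tor}(\tH_{j-1}(K;\Z),\F_2).
\]
The first summand is $\F_2^{r_j}\oplus(T_j\otimes\F_2)$. Every summand here is an $\F_2$-vector space of nonnegative dimension, so
\[
  \dim_{\F_2}\tH_j(K;\F_2)=r_j+\dim(T_j\otimes\F_2)+\dim\operatorname{Tor}(\tH_{j-1}(K;\Z),\F_2)\ \ge\ r_j.
\]
This is exactly the claimed inequality.

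There is no substantive obstacle. The only points needing care are bookkeeping about reduced versus unreduced homology and the edge case $j=-1$, including whether the empty face is used. The universal coefficient theorem applies verbatim to the augmented chain complex, which is again a complex of free abelian groups, so the same argument covers reduced homology in every degree. I would state this explicitly and then conclude.
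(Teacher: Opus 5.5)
Your proposal is correct and is the paper's argument: the universal coefficient theorem over $\F_2$ gives $\tH_j(K;\Z)\otimes\F_2\supseteq\F_2^{r_j}$ as a direct summand of $\tH_j(K;\F_2)$, and $r_j=\operatorname{rank}\tH_j(K;\Z)=\dim_\Q\tH_j(K;\Q)$. Your remark that the theorem applies to the augmented chain complex, which is free, is the right justification for reduced homology.
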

We'll discuss the reduced homology of $L_S$ for an arbitrary six set $S$ next.  
\begin{lemma}\label{lem:sixpoint}
For every six-element set $S$,
\begin{equation}\label{eq:sixpointvanishing}
  \tH_6(L_S;\Q)=\tH_7(L_S;\Q)=0 .
\end{equation}

More precisely the reduced Betti numbers of $L_S$ over $\F_2$ are those of
Table~\ref{tab:L6}. Note that they are $0$ except for
$\dim\tH_5(L_S;\F_2)=29$ and $\dim\tH_8(L_S;\F_2)=6$.
\end{lemma}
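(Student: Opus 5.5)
The plan is to reduce the rational vanishing to a finite linear-algebra computation over $\F_2$, and then transfer the result to $\Q$ by Lemma~\ref{lem:rankcomparison}. Since $L_S$ depends only on $|S|$ up to isomorphism, we may take $S=[6]$. By Lemma~\ref{lem:Lm-1}, $L_6$ is the complex of pairwise intersecting families of triples of $[6]$ with transversal number at most two. It is the flag complex (restricted by the transversal condition) on the $20$ triples, and a triple meets every other triple except its complement.

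First I would make the complex explicit. A family with $\tau\le 2$ is contained in $\mathcal{S}_e=\{v:v\cap e\ne\emptyset\}$ for some pair $e$. Moreover, pairwise intersecting means the family contains no complementary pair $\{v,[6]\setminus v\}$. Hence the facets of $L_6$ are the maximal subsets of the $16$-element sets $\mathcal{S}_e$ that choose at most one triple from each complementary pair. Inside $\mathcal{S}_e$, with $e=\{a,b\}$, the complementary pairs are of two kinds. Pairs with one triple containing $a$ but not $b$ and the other containing $b$ but not $a$ are split between both sides. For pairs where one triple contains $\{a,b\}$, the complement misses $e$ and is absent from $\mathcal{S}_e$. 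Thus $\Lambda_e:=L_6\cap\Delta_e$ is a cone on the $4$ triples containing $e$, joined with a cross-polytope on the remaining pairs. This gives an explicit facet list (the $15$ complexes $\Lambda_e$, each of dimension $9$ or less).

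Second, I would enumerate all faces of $L_6$ dimension by dimension. These are the subsets of some $\mathcal{S}_e$ avoiding complementary pairs, so the enumeration is a closed union of $15$ explicitly described complexes. I would then write down the boundary matrices $\partial_j\colon C_j(L_6;\F_2)\to C_{j-1}(L_6;\F_2)$ and compute their ranks by Gaussian elimination over $\F_2$. The reduced Betti numbers follow from $\dim\tH_j=f_j-\operatorname{rank}\partial_j-\operatorname{rank}\partial_{j+1}$, which should reproduce Table~\ref{tab:L6} and in particular $\dim\tH_6=\dim\tH_7=0$ over $\F_2$. As a sanity check, I would verify $\widetilde\chi(L_6)=-29+6=-23$ against the alternating face count. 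A second check comes from the long exact sequence of $(D_6,L_6)$: Corollary~\ref{cor:DN} gives $\tH_*(D_6;\Q)$ nonzero only as $\Q^9$ in degree $7$ and $\Q^6$ in degree $8$, and Lemma~\ref{lem:localE2} gives the relative Euler characteristic $20$. These must be consistent with $\widetilde\chi(L_6)=\widetilde\chi(D_6)-\chi(D_6,L_6)=(-9+6)-20=-23$.

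Finally, Lemma~\ref{lem:rankcomparison} gives $\dim_\Q\tH_j(L_S;\Q)\le\dim_{\F_2}\tH_j(L_S;\F_2)=0$ for $j=6,7$, which proves \eqref{eq:sixpointvanishing}. The main obstacle is the size of the computation: the face numbers in dimensions $6$ to $8$ run into the thousands, so the rank computation is computer-assisted. To make it credible, I would record the face vector, state the ranks, and cross-check via the Euler characteristic. I would also cut down the work by using the $S_6$-symmetry, or by first collapsing the cone directions of each $\Lambda_e$, although the exact rank computation remains the essential step.
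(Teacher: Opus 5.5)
Your proposal follows the paper's proof. You take $S=[6]$ and enumerate the faces of $L_6$, compute the exact $\F_2$ ranks of the boundary maps to recover Table~\ref{tab:L6}, and transfer the vanishing in degrees $6$ and $7$ to $\Q$ via Lemma~\ref{lem:rankcomparison}. The only difference is cosmetic: the paper filters the $3^{10}$ words in $\{\emptyset,+,-\}^{10}$ by the transversal condition, whereas you take the union of the fifteen joins $\Lambda_e$ of a $3$-simplex with $\partial C_6$, and both your Euler-characteristic checks ($-23$) agree with the paper's appendix.
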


\begin{table}[ht]
\caption{Exact face numbers, boundary ranks and reduced Betti numbers of $L_6$
over $\F_2$. Here $r_j=\operatorname{rank}_{\F_2}
(\partial_j\colon C_j\to C_{j-1})$ for the augmented complex, and $r_{10}=0$.}
\label{tab:L6}
\begin{tabular}{lrrrrrrrrrr}
\toprule
$j$ & $0$ & $1$ & $2$ & $3$ & $4$ & $5$ & $6$ & $7$ & $8$ & $9$\\
\midrule
$f_j$ & $20$ & $180$ & $960$ & $3360$ & $8064$ & $13410$ & $14640$ & $9900$ &
  $3740$ & $596$\\
$r_j$ & $1$ & $19$ & $161$ & $799$ & $2561$ & $5503$ & $7878$ & $6762$ &
  $3138$ & $596$\\
$\widetilde\beta_j$ & $0$ & $0$ & $0$ & $0$ & $0$ & $29$ & $0$ & $0$ & $6$ &
  $0$\\
\bottomrule
\end{tabular}
\end{table}

\begin{proof}
Relabelling the ground set induces an isomorphism. So we take $S=[6]$. Note that we can split the twenty triples 
into ten complementary pairs. A family of triples is pairwise intersecting
exactly when it selects at most one member from each of these ten pairs. Hence the pairwise intersecting families are
encoded bijectively by the words of $\{\emptyset,+,-\}^{10}$, giving $3^{10}$
candidate faces including the empty one. By Lemma~\ref{lem:Lm-1}, a pairwise intersecting family
$\sigma$ lies in $L_6$ if and only if
\[
  \text{there is }e\in\tbinom{[6]}{2}\text{ with }e\cap v\neq\emptyset
  \text{ for every }v\in\mathcal\sigma. 
\]
 Grouping the retained faces by dimension, indexing the
$(j-1)$-faces, inserting the simplicial boundary of every oriented $j$-face as a
column and row-reducing over $\F_2$ produces the face numbers and ranks of
Table~\ref{tab:L6}. Over $\F_2$ every incidence coefficient is $1$, so a column
is exactly the bit vector of its codimension-one faces and the elimination uses
only exclusive-or on exact bit vectors. No floating-point arithmetic and no
numerical rank decision occur. In degree zero the augmented boundary
$C_0\to C_{-1}\cong\F_2$ is used, so the resulting Betti numbers are obtained from \begin{equation}\label{eq:ranknullity}
  \dim_{\mathbb{F}}\tH_j(K;\mathbb{F})=f_j-r_j^{\mathbb{F}}-r_{j+1}^{\mathbb{F}}.
\end{equation} 
And in particular
\[
\begin{aligned}
  \dim\tH_6(L_6;\F_2)&=14640-7878-6762=0,\\
  \dim\tH_7(L_6;\F_2)&=9900-6762-3138=0 .
\end{aligned}
\]
Lemma~\ref{lem:rankcomparison} now gives \eqref{eq:sixpointvanishing}. The
computation is described in Appendix~\ref{sec:appendix}.
\end{proof}

Next, the local relative homology follows without computing any higher boundary maps.

\begin{lemma}\label{lem:localrel}
For every six-element set $S$,
\begin{equation}\label{eq:localrelhom}
  H_i(D_S,L_S;\Q)\cong
  \begin{cases}
    \Q^{29},& i=6,\\
    \Q^{9},& i=7,\\
    0,& i\notin\{6,7\}.
  \end{cases}
\end{equation}
\end{lemma}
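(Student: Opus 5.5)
We combine three ingredients already available: the vanishing of $H_i(D_S,L_S;\Q)$ outside $i\in\{6,7\}$ from Lemma~\ref{lem:localE2}, the homology of $D_S$ from Corollary~\ref{cor:DN} with $\ell=6$, and the vanishing $\tH_6(L_S;\Q)=\tH_7(L_S;\Q)=0$ from Lemma~\ref{lem:sixpoint}. With these, the long exact sequence \eqref{eq:les} of the pair $(D_S,L_S)$ pins down the two remaining dimensions.

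\emph{Step 1: the input for $D_S$.} For $\ell=6$, Corollary~\ref{cor:DN} gives
\[
\tH_5(D_S;\Q)\cong\Q^{\binom56}=0,\qquad \tH_7(D_S;\Q)\cong\Q^{9},\qquad \tH_8(D_S;\Q)\cong\Q^{\binom65}=\Q^6,
\]
and all other reduced homology of $D_S$ vanishes. In particular $\tH_6(D_S;\Q)=0$.

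\emph{Step 2: degree $7$.} Consider the segment
\[
\tH_7(L_S)\to\tH_7(D_S)\to H_7(D_S,L_S)\to\tH_6(L_S).
\]
Both outer terms vanish by Lemma~\ref{lem:sixpoint}, so $H_7(D_S,L_S;\Q)\cong\tH_7(D_S;\Q)\cong\Q^9$.

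\emph{Step 3: degree $6$.} From $\dim H_7=9$ and the Euler characteristic identity \eqref{eq:localeuler}, together with the vanishing outside degrees $6$ and $7$, we get $\dim H_6(D_S,L_S;\Q)=20+9=29$. As a cross-check, the segment
\[
\tH_6(L_S)=0\to\tH_6(D_S)=0\to H_6(D_S,L_S)\to\tH_5(L_S)\to\tH_5(D_S)=0
\]
gives $H_6(D_S,L_S;\Q)\cong\tH_5(L_S;\Q)$. This is consistent with the $\F_2$ value $29$ in Table~\ref{tab:L6}, but Lemma~\ref{lem:rankcomparison} yields only the inequality $\le29$, so we rely on the Euler-characteristic route for the exact value.

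\emph{Step 4: the remaining degrees.} For $i\notin\{6,7\}$ the vanishing is already part of Lemma~\ref{lem:localE2}.

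No step here is a real obstacle, since the heavy lifting was done in Lemma~\ref{lem:localE2} and Lemma~\ref{lem:sixpoint}. The only point needing care is degree $6$: the argument must use \eqref{eq:localeuler} rather than the $\F_2$ table, because the table bounds the rational Betti number only from above.
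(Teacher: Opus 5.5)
Your proof is correct and follows the paper's argument. You get $H_7(D_S,L_S;\Q)\cong\tH_7(D_S;\Q)\cong\Q^9$ from the same segment of the long exact sequence, using the vanishing in Lemma~\ref{lem:sixpoint}, and then $\dim H_6(D_S,L_S;\Q)=20+9=29$ from \eqref{eq:localeuler} together with the concentration in degrees $6$ and $7$ from Lemma~\ref{lem:localE2}. Your side remark is also right: $H_6(D_S,L_S;\Q)\cong\tH_5(L_S;\Q)$, and the $\F_2$ table bounds this only from above, so the Euler-characteristic route is the one that gives the exact value.
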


\begin{proof}
From~\ref{lem:localE2} the relative homology of the pair $(D_S, L_S)$ is trivial outside dimensions $6$ and $7$.
For $|S|=6$, $\tH_7(D_S;\Q)\cong\Q^9$ and
$\tH_6(D_S;\Q)=0$ by Corollary~\ref{cor:DN}. The long exact sequence of the pair contains
\[
  \tH_7(L_S)\longrightarrow\tH_7(D_S)\longrightarrow H_7(D_S,L_S)
  \longrightarrow\tH_6(L_S). 
\]
By Lemma~\ref{lem:sixpoint} two outer vector spaces are trivial. By exactness, 
\begin{equation}\label{eq:H7local}
  H_7(D_S,L_S;\Q)\cong \tH_7(D_S) \cong\Q^9 .
\end{equation}
Since the relative homology is concentrated in degrees $6$ and $7$,
by Lemma~\ref{lem:localE2}, 
\[
  \dim H_6(D_S,L_S)-\dim H_7(D_S,L_S)=\chi(D_S, L_S)= 20,
\]
and hence $\dim H_6(D_S,L_S)=29$.
\end{proof}

By Lemma~\ref{lem:sixset} the relative homology of $(D_\ell, L_\ell)$  for $\ell\geq 6$ is the direct sum of one copy of
the local relative group for each of the $\binom{\ell}{6}$ six-element subsets of
$[N]$. By Lemma~\ref{lem:localrel}, we get the following corollary. 

\begin{corollary}[Relative homology]\label{cor:relativeglobal}
For every $\ell\ge6$,
\begin{equation}\label{eq:relglobal}
  H_i(D_\ell, L_\ell;\Q)\cong
  \begin{cases}
    \Q^{29\binom{\ell}{6}},& i=6,\\[2pt]
    \Q^{9\binom{\ell}{6}},& i=7,\\[2pt]
    0,& i\notin\{6,7\}.
  \end{cases}
\end{equation}
\end{corollary}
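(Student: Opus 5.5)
The corollary is obtained by combining the two preceding results, so the plan is short. First I will invoke Lemma~\ref{lem:sixset}. For every degree $i$ it identifies $H_i(D_\ell,L_\ell;\Q)$ with $\bigoplus_{S\in\binom{[\ell]}{6}}H_i(D_S,L_S;\Q)$, the isomorphism being induced by the inclusions of pairs. Next I will observe that each summand depends only on $|S|=6$, not on $S$ itself. A bijection $S\to[6]$ induces simplicial isomorphisms $D_S\cong D_6$ and $L_S\cong L_6$ compatible with the inclusion. This holds because both defining conditions are invariant under relabelling the ground set: the transversal number $\tau(\sigma)\le2$ for $D$, and in addition pairwise intersection for $L$. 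Hence every summand is given by Lemma~\ref{lem:localrel}, namely $\Q^{29}$ in degree $6$, $\Q^{9}$ in degree $7$, and $0$ otherwise.

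Summing over the $\binom{\ell}{6}$ six-element subsets then gives $\dim H_6(D_\ell,L_\ell;\Q)=29\binom{\ell}{6}$ and $\dim H_7(D_\ell,L_\ell;\Q)=9\binom{\ell}{6}$, with vanishing in all other degrees. This is exactly \eqref{eq:relglobal}. For $\ell=6$ there is a single summand, so the statement reduces to Lemma~\ref{lem:localrel} itself, which is consistent.

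No step here is a genuine obstacle, since all the substantive work is already in Lemmas~\ref{lem:sixset} and~\ref{lem:localrel}. The one point to check is that Lemma~\ref{lem:localrel} holds for an arbitrary six-element $S\subseteq[\ell]$ and not only for $S=[6]$. This is covered by the relabelling isomorphism above; note that the proof of Lemma~\ref{lem:sixpoint} already reduces to $S=[6]$ in the same way, and Corollary~\ref{cor:DN} applies to $D_S\cong D_6$. I would also confirm that Lemma~\ref{lem:sixset} is stated as an isomorphism in each degree separately, so that the degreewise dimension count is legitimate.
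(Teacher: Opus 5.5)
Your proposal is correct and is the paper's argument: Lemma~\ref{lem:sixset} splits $H_i(D_\ell,L_\ell;\Q)$ degreewise into $\binom{\ell}{6}$ local summands, and Lemma~\ref{lem:localrel} gives each summand. Your relabelling remark is harmless but not needed, since Lemma~\ref{lem:localrel} is already stated for every six-element set $S$.
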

\begin{lemma}[The degree-seven comparison map]\label{lem:aN}
For every $\ell\ge6$ the  map induced by the quotient map
\[
  \alpha_\ell\colon\tH_7(D_\ell;\Q)\longrightarrow H_7(D_\ell,L_\ell;\Q)
\]
is an isomorphism.
\end{lemma}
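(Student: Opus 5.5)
Both sides of $\alpha_\ell$ have dimension $9\binom{\ell}{6}$, by Corollary~\ref{cor:DN} and Corollary~\ref{cor:relativeglobal}. So it suffices to prove that $\alpha_\ell$ is injective, or alternatively surjective. The plan is to reduce to six-element sets, where Lemma~\ref{lem:localrel} already gives the isomorphism, and then compare via naturality.

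For each six-set $S$, the inclusion of pairs $(D_S,L_S)\hookrightarrow(D_\ell,L_\ell)$ gives a commutative square
\[
\begin{array}{ccc}
\bigoplus_S\tH_7(D_S) & \xrightarrow{\ \oplus\alpha_S\ } & \bigoplus_S H_7(D_S,L_S)\\
\downarrow{\scriptstyle \iota} & & \downarrow{\scriptstyle \cong}\\
\tH_7(D_\ell) & \xrightarrow{\ \alpha_\ell\ } & H_7(D_\ell,L_\ell).
\end{array}
\]
The right vertical map is an isomorphism by Lemma~\ref{lem:sixset}. Each $\alpha_S$ is an isomorphism by \eqref{eq:H7local}, since in the proof of Lemma~\ref{lem:localrel} that identification is exactly the connecting map of the pair, using $\tH_7(L_S)=\tH_6(L_S)=0$ from Lemma~\ref{lem:sixpoint}. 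The top row is therefore an isomorphism, so the composite $\alpha_\ell\circ\iota$ is an isomorphism. Hence $\alpha_\ell$ is surjective, and by the dimension count above it is an isomorphism.

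The only points to check are that the square commutes and that no hidden step is needed. Commutativity is simply naturality of the map $\tH_7(K)\to H_7(K,L)$ with respect to maps of pairs, and the direct sum on the left maps into $\tH_7(D_\ell)$ by summing the inclusion-induced maps. The degree bookkeeping is also consistent: $\binom{\ell}{6}$ summands of dimension $9$ match $\dim\tH_7(D_\ell)=9\binom{\ell}{6}$. I expect no real obstacle here. The one subtle point is confirming that the isomorphism of Lemma~\ref{lem:sixset} is the map induced by inclusions on the abutments, and not merely an abstract isomorphism. The lemma asserts this explicitly, because the comparison theorem yields the map induced by the morphism of pairs. With that, surjectivity of $\alpha_\ell$ follows directly, and equality of dimensions finishes the proof.
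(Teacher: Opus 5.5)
Your proposal is correct and follows the paper's proof essentially step for step: naturality of $\tH_7(K)\to H_7(K,L)$ over the six-sets, the inclusion-induced isomorphism of Lemma~\ref{lem:sixset}, and the bijectivity of each $\alpha_S$ from Lemma~\ref{lem:localrel} give surjectivity of $\alpha_\ell$, and the common dimension $9\binom{\ell}{6}$ of both sides finishes it. The only slip is terminological: $\alpha_S$ is the map induced by the quotient of the pair, not the connecting map, and it is bijective because the vanishing groups $\tH_7(L_S)$ and $\tH_6(L_S)$ flank it in the long exact sequence.
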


\begin{proof}
As proved in Lemma~\ref{lem:localrel} the local map $\alpha_S\colon\tH_7(D_S;\Q)\to H_7(D_S,L_S;\Q)$ is an isomorphism for every six-set $S$. For
each inclusion $S\subseteq[\ell]$, naturality of the long exact sequence of a pair
gives a commutative square in dimension $7$
\[
\begin{array}{ccc}
\tH_7(D_S;\Q) & \xrightarrow{\ \alpha_S\ } & H_7(D_S,L_S;\Q)\\[2pt]
\downarrow & & \downarrow\\[2pt]
\tH_7(D_N;\Q) & \xrightarrow{\ \alpha_N\ } & H_7(D_N,L_N;\Q).
\end{array}
\]
Taking a direct sum of all six-subsets of $[\ell]$, the resulting right-hand vertical map is an isomorphism by Lemma~\ref{lem:sixset}. Since every $\alpha_S$
is surjective, $\alpha_N$ is surjective. Also by Corollary~\ref{cor:DN} and
Corollary~\ref{cor:relativeglobal}, the domain and codomain of $\alpha_N$ have the same
dimension $9\binom{N}{6}$, so $\alpha_N$ is an isomorphism.
\end{proof}

\begin{corollary}[Homology of $L_\ell$]\label{cor:LN}
For every $\ell\ge6$,
\begin{equation}\label{eq:LNhomology}
  \tH_i(L_\ell;\Q)\cong
  \begin{cases}
    \Q^{\binom{\ell-1}{6}+29\binom{\ell}{6}},& i=5,\\[2pt]
    \Q^{\binom{\ell}{5}},& i=8,\\[2pt]
    0,& i\notin\{5,8\}.
  \end{cases}
\end{equation}
\end{corollary}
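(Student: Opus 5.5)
The plan is to read off $\tH_*(L_\ell;\Q)$ from the long exact sequence \eqref{eq:les} of the pair $(D_\ell,L_\ell)$. The inputs are the absolute homology of $D_\ell$ from Corollary~\ref{cor:DN}, the relative homology from Corollary~\ref{cor:relativeglobal}, and the comparison map of Lemma~\ref{lem:aN}.

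The relevant segment of the sequence is
\[
  H_{i+1}(D_\ell,L_\ell)\xrightarrow{\ \partial\ }\tH_i(L_\ell)\longrightarrow\tH_i(D_\ell)\longrightarrow H_i(D_\ell,L_\ell).
\]
I will split into cases according to $i$.

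For $i\notin\{5,6,7,8\}$, both $\tH_i(D_\ell)$ and $H_{i+1}(D_\ell,L_\ell)$ vanish, so $\tH_i(L_\ell)=0$.

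For $i=8$, the group $H_9(D_\ell,L_\ell)$ is zero and so is $H_8(D_\ell,L_\ell)$. Hence $\tH_8(L_\ell)\cong\tH_8(D_\ell)\cong\Q^{\binom{\ell}{5}}$.

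For $i=7$ and $i=6$, I use the segment
\[
  0=H_8(D_\ell,L_\ell)\to\tH_7(L_\ell)\to\tH_7(D_\ell)\xrightarrow{\alpha_\ell}H_7(D_\ell,L_\ell)\to\tH_6(L_\ell)\to\tH_6(D_\ell)=0 .
\]
Since $\alpha_\ell$ is an isomorphism by Lemma~\ref{lem:aN}, exactness gives $\tH_7(L_\ell)=\ker\alpha_\ell=0$ and $\tH_6(L_\ell)=\operatorname{coker}\alpha_\ell=0$.

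For $i=5$, the segment is
\[
  \tH_6(D_\ell)=0\to H_6(D_\ell,L_\ell)\to\tH_5(L_\ell)\to\tH_5(D_\ell)\to H_5(D_\ell,L_\ell)=0 .
\]
This is a short exact sequence of $\Q$-vector spaces, so
\[
  \dim\tH_5(L_\ell)=29\tbinom{\ell}{6}+\tbinom{\ell-1}{6}.
\]

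The argument is pure bookkeeping once Lemma~\ref{lem:aN} is in hand. The only place needing care is the degree $6$/$7$ segment, and there the isomorphism $\alpha_\ell$ does all the work. I should also check that the boundary conventions match: the relative groups vanish outside degrees $6$ and $7$, and the absolute groups of $D_\ell$ vanish outside degrees $5$, $7$ and $8$.
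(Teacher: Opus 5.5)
Your proposal is correct and is essentially the paper's own proof: you use the same long exact sequence of the pair $(D_\ell,L_\ell)$, with inputs from Corollary~\ref{cor:DN}, Corollary~\ref{cor:relativeglobal} and Lemma~\ref{lem:aN}, and the same case split (your first case simply absorbs the paper's separate treatment of $i\ge 9$). You handle the degree $6$/$7$ segment exactly as the paper does: since $\alpha_\ell$ is an isomorphism, exactness forces $\tH_6(L_\ell)=\tH_7(L_\ell)=0$.
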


\begin{proof}
We'll use the long exact sequence of $(D_\ell,L_\ell)$ together with Corollary~\ref{cor:DN} and Corollary~\ref{cor:relativeglobal}.

For $i\leq 4$ $H_{i+1}(D_\ell,L_\ell)$ and $\tH_i(D_\ell)$ are trivial; and hence the exactness of 
$H_{i+1}(D_\ell,L_\ell)\to\tH_i(L_\ell)\to\tH_i(D_\ell)$ yields that
$\tH_i(L_\ell)=0$.

Now take $i=5$. We consider the following part of the long exact sequence
\[
  \tH_6(D_\ell)\to H_6(D_\ell,L_\ell)\to\tH_5(L_\ell)\to\tH_5(D_\ell)\to H_5(D_\ell,L_\ell).
\]
Note that $\tH_6(D_\ell)$ and $H_5(D_\ell,L_\ell)$ are trivial and also
$H_6(D_\ell,L_\ell)\cong\Q^{29\binom{\ell}{6}}$, $\tH_5(D_\ell)\cong\Q^{\binom{\ell-1}{6}}$.
This gives a short exact sequence which splits
\[
  0\longrightarrow\Q^{29\binom{\ell}{6}}\longrightarrow\tH_5(L_\ell;\Q)
  \longrightarrow\Q^{\binom{\ell-1}{6}}\longrightarrow0. 
\]
Hence, $\tH_5(L_\ell;\Q)\cong\Q^{\binom{\ell-1}{6}+29\binom{\ell}{6}}$.

For $i=6, 7$,  the relevant segment of the long exact sequence is
\begin{multline*}
  H_8(D_\ell,L_\ell)\longrightarrow\tH_7(L_\ell)\longrightarrow\tH_7(D_\ell)
  \xrightarrow{\ \alpha_\ell\ }H_7(D_\ell,L_\ell)\\
  \longrightarrow\tH_6(L_\ell)\longrightarrow\tH_6(D_\ell).
\end{multline*}
Note that $H_8(D_\ell,L_\ell)$ and $\tH_6(D_\ell)$ are trivial and  $\alpha_\ell$ is an isomorphism by
Lemma~\ref{lem:aN}, so exactness on both sides of $\alpha_\ell$ gives that 
$\tH_6(L_\ell;\Q)=\tH_7(L_\ell;\Q)=0$.

Finally, for $i\ge8$ one has $H_{i+1}(D_\ell,L_\ell)=H_i(D_\ell,L_\ell)=0$, and so exactness
gives $\tH_i(L_\ell;\Q)\cong\tH_i(D_\ell;\Q)$, which is $\Q^{\binom{\ell}{5}}$ for $i=8$
and zero for $i\ge9$. This finishes the proof. 
\end{proof}

\section{The main theorem}\label{sec:proof}

Now we are ready to identify the homology of $\Ind(\KG(3,k))$. 
\begin{theorem}\label{thm:main}
For every $k\ge 0$,
\[
  \tH_i\bigl(\Ind(\KG(3,k));\Q\bigr)\cong
  \begin{cases}
    \Q^{\binom{k+5}{7}+29\binom{k+6}{7}}, & i=6,\\[2pt]
    \Q^{\binom{k+6}{6}}, & i=9,\\[2pt]
    0, & i\notin\{6,9\}.
  \end{cases}
\]
\end{theorem}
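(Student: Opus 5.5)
We argue by induction on $m=k+6$, using the split short exact sequences of Lemma~\ref{lem:decomposition} together with Corollary~\ref{cor:LN}. The base case is $m=6$ ($k=0$). Here $X_6$ is the flag complex on the twenty triples of $[6]$ in which only complementary triples are non-adjacent. So $X_6=(S^0)^{*10}=\partial C_{10}\cong S^9$ by \eqref{eq:crosspoly}. This gives $\beta_9=1=\binom{6}{6}$ and all other reduced Betti numbers zero. This agrees with the formula, since $\binom{5}{7}=\binom{6}{7}=0$.

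For $m\ge7$, Lemma~\ref{lem:decomposition} gives
\[
\dim\tH_q(X_m;\Q)=\dim\tH_q(X_{m-1};\Q)+\dim\tH_{q-1}(L_{m-1};\Q).
\]
Apply Corollary~\ref{cor:LN} with $\ell=m-1\ge6$. The only nonzero values of $\tH_{q-1}(L_{m-1})$ occur at $q-1=5$ and $q-1=8$. Hence new homology appears only in degrees $q=6$ and $q=9$, and the vanishing outside $\{6,9\}$ propagates from $X_6$ to every $X_m$.

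It remains to sum the increments. In degree $9$ we get
\[
\beta_9(X_m)=1+\sum_{\ell=6}^{m-1}\binom{\ell}{5}.
\]
By the hockey-stick identity this equals $1+\binom{m}{6}-\binom{5}{5}\cdot 1\cdot\ldots$; more precisely $\sum_{\ell=5}^{m-1}\binom{\ell}{5}=\binom{m}{6}$, and the $\ell=5$ term equals $1$, which cancels the base value. So $\beta_9(X_m)=\binom{m}{6}=\binom{k+6}{6}$.

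In degree $6$ we get
\[
\beta_6(X_m)=\sum_{\ell=6}^{m-1}\Bigl(\binom{\ell-1}{6}+29\binom{\ell}{6}\Bigr).
\]
By the hockey-stick identity again, $\sum_{\ell=6}^{m-1}\binom{\ell}{6}=\binom{m}{7}$ and $\sum_{\ell=6}^{m-1}\binom{\ell-1}{6}=\binom{m-1}{7}$. Hence $\beta_6=\binom{m-1}{7}+29\binom{m}{7}=\binom{k+5}{7}+29\binom{k+6}{7}$.

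All the substantive work is already contained in the earlier lemmas, in particular the six-point computation and the comparison map $\alpha_\ell$. The only step needing care here is the bookkeeping: the index shift from $L_{m-1}$ to $\ell=m-1$, the degree shift $q-1$, and checking that the base case is consistent with the $\ell=5$ boundary term of the hockey-stick sum.
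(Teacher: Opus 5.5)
Your proposal is correct and follows the paper's proof essentially step for step. The steps are: the base case $X_6=(S^0)^{*10}\cong S^9$; the iterated split sequences of Lemma~\ref{lem:decomposition}, with Corollary~\ref{cor:LN} applied at $\ell=m-1\ge6$ to confine new classes to degrees $6$ and $9$; and the hockey-stick identity for both sums. The garbled half-sentence in your $\beta_9$ computation should simply read $1+\sum_{\ell=6}^{m-1}\binom{\ell}{5}=\binom{5}{5}+\sum_{\ell=6}^{m-1}\binom{\ell}{5}=\binom{m}{6}$, with the base value supplying the $\ell=5$ term rather than cancelling anything.
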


\begin{proof}
We'll start with the base case $X_6$ when $k=0$. The twenty triples of $[6]$ form ten complementary pairs, and two triples are disjoint exactly when they are complementary; hence a family is pairwise intersecting exactly when it contains at most one member of each pair. Therefore,  by \eqref{eq:crosspoly}
\begin{equation}\label{eq:X6}
  X_6=(S^0)^{*10}\simeq S^{9}.
\end{equation}

By Lemma~\ref{lem:decomposition},   for each $j\ge7$, we get the  decomposition
$\tH_q(X_j;\Q)\cong\tH_q(X_{j-1};\Q)\oplus\tH_{q-1}(L_{j-1};\Q)$. Therefore induction on
$j$ yields, for every $m\ge6$,
\begin{equation}\label{eq:induction}
  \tH_q(X_m;\Q)\cong\tH_q(X_6;\Q)\oplus\bigoplus_{\ell=6}^{m-1}\tH_{q-1}(L_\ell;\Q).
\end{equation}
By \eqref{eq:X6} in the base case the homology is nonzero only for $q=9$, and by
Corollary~\ref{cor:LN} a summand involving $L_\ell$ is nontrivial only for
$q-1\in\{5,8\}$, i.e., $q\in\{6,9\}$. Hence,  $\tH_q(X_m;\Q)=0$ for
$q\notin\{6,9\}$.

For $q=6$ the hockey-stick identity gives
\[
\begin{aligned}
  \beta_6(X_m; \Q)&=\sum_{\ell=6}^{m-1}\left[\binom{\ell-1}{6}+29\binom{\ell}{6}\right]
  =\sum_{j=5}^{m-2}\binom{j}{6}+29\sum_{\ell=6}^{m-1}\binom{\ell}{6}\\
  &=\binom{m-1}{7}+29\binom{m}{7}.
\end{aligned}
\]
For $q=9$, we get that
\[
  \beta_9(X_m; \Q)=1+\sum_{\ell=6}^{m-1}\binom{\ell}{5}=1+\left[\binom{m}{6}-1\right]
  =\binom{m}{6}.
\]
Finally since $X_m=\Ind(\KG(3,m-6))$ by \eqref{eq:Xm}, we  substitute $m$ by $k+6$ and get that
\[
  \beta_6(X_m; \Q)=\binom{k+5}{7}+29\binom{k+6}{7},\qquad \beta_9(X_m; \Q)=\binom{k+6}{6},
\]
and this finishes the proof. 
\end{proof}

For any field $\K$ of characteristic zero, the following holds
\[
  \tH_i(X_m;\K)\cong\tH_i(X_m;\Q)\otimes_\Q\K,
\]
since the simplicial chain complex over $\K$ is obtained from the rational one
by tensoring and tensoring vector spaces is exact. Theorem~\ref{thm:main} therefore holds over
every field of characteristic zero, in particular over $\R$ and $\mathbb{C}$.

\section{Open questions}\label{sec:questions}
 
We conclude with some questions that remain open. 
 
\begin{question} [Integral homology]\label{q:integral}
Is $\tH_*(\Ind(\KG(3,k));\Z)$ torsion-free for every $k$?

\end{question}
The question is natural because the computation passes through the complex
$D_N$, which is homotopy equivalent to $\Cov(N,3)$ and whose integral homology
therefore contains $2$-torsion by Theorem~\ref{thm:jonsson}. The argument of this
paper works over $\Q$, where that torsion is ignored. 
 
\begin{question}[Homotopy type]\label{q:homotopy}
Is $\Ind(\KG(3,k))$ homotopy equivalent to a wedge of $\binom{k+5}{7}+29\binom{k+6}{7}$ copies of $S^6$
and $\binom{k+6}{6}$ copies of $S^9$?
\end{question}
 
For $k=0$ this holds, since $\Ind(\KG(3,0))=X_6$ is the boundary of a
$10$-dimensional cross-polytope by \eqref{eq:X6}. In general the degree-nine
classes have natural candidates for spherical representatives: for each
six-element subset $S$ of the ground set, the intersecting families of triples
of $S$ form an embedded copy of $X_6\cong S^9$, and there are exactly
$\binom{k+6}{6}=\beta_9$ of them.

\begin{question}[A basis in degree six]\label{q:basis}
Is there a combinatorial description of a basis of
$\tH_6(\Ind(\KG(3,k));\Q)$?
\end{question}
 
In degree nine the embedded spheres above suggest a basis indexed by the
six-element subsets. In degree six the formula
$\beta_6=\binom{k+5}{7}+29\binom{k+6}{7}$ suggests $29$ classes for each
seven-element subset together with a correction term, but we know of no
combinatorial explanation of the constant $29$.
 
\begin{question}[Larger $n$]\label{q:largern}
For $n\ge4$, what are the dimensions of the nontrivial reduced homology
$\tH_*(\Ind(\KG(n,k));\Q)$ and their ranks?
\end{question}

\section*{Declaration on the use of AI}
In preparing this paper the author used Claude Opus 5.5 and  GPT--6 Astra. They were used to help develop the proofs and identify gaps
and errors, to suggest improvements to the exposition and organization, and to help
typeset the manuscript and figures in \LaTeX. The Python script
\texttt{l6\_check.py}, which reproduces Table~2, was written with its assistance.
All mathematical content, the computations and the references were checked by
the author, who takes full responsibility for the paper.

\appendix
\section{The finite computations}\label{sec:appendix}

Table~\ref{tab:L6} is produced by the Python script \texttt{l6\_check.py},
available as an ancillary file with this paper. It enumerates the faces of $L_6$
as in the proof of Lemma~\ref{lem:sixpoint}, forms the boundary matrices, and
computes their ranks over $\F_2$ by Gaussian elimination on bit vectors; all
arithmetic is exact. The script also performs two consistency checks: the
reduced Euler characteristic is $-23=-29+6$, and the Betti numbers $29$ and $6$
in degrees $5$ and $8$ agree with the values predicted for $\ell=6$ by
Corollary~\ref{cor:LN}, although the proof of Theorem~\ref{thm:main} uses only that the homology is trivial in  degrees $6$ and $7$, in Lemma~\ref{lem:localrel}.

\bibliographystyle{plain}
\bibliography{refeneces_top}

@incollection{Bjorner,
  title={Topological methods},
  author={Bj{\"o}rner, Anders},
  booktitle={Handbook of Combinatorics, {V}ol.~2},
  editor={Graham, Ronald L. and Gr{\"o}tschel, Martin and Lov{\'a}sz, L{\'a}szl{\'o}},
  publisher={Elsevier},
  address={Amsterdam},
  pages={1819--1872},
  year={1995}
}

@article{Dowker,
  title={Homology groups of relations},
  author={Dowker, Clifford H.},
  journal={Annals of Mathematics},
  volume={56},
  pages={84--95},
  year={1952}
}

@misc{FW,
  title={Exploring homological properties of independence complexes of {K}neser graphs},
  author={Feng, Z. and Wang, G.},
  note={Preprint, arXiv:2404.10566},
  year={2024}
}

@book{Hatcher,
  title={Algebraic {T}opology},
  author={Hatcher, Allen},
  publisher={Cambridge University Press},
  address={Cambridge},
  year={2002}
}

@article{JonssonSIAM,
  title={Simplicial complexes of graphs and hypergraphs with a bounded covering number},
  author={Jonsson, Jakob},
  journal={SIAM Journal on Discrete Mathematics},
  volume={19},
  pages={633--650},
  year={2005}
}

@book{JonssonBook,
  title={Simplicial {C}omplexes of {G}raphs},
  author={Jonsson, Jakob},
  series={Lecture Notes in Mathematics},
  volume={1928},
  publisher={Springer},
  address={Berlin},
  year={2008}
}

@book{McCleary,
  title={A {U}ser's {G}uide to {S}pectral {S}equences},
  author={McCleary, John},
  edition={Second},
  series={Cambridge Studies in Advanced Mathematics},
  volume={58},
  publisher={Cambridge University Press},
  address={Cambridge},
  year={2001}
}

@book{Brown,
  title={Cohomology of {G}roups},
  author={Brown, Kenneth S.},
  series={Graduate Texts in Mathematics},
  volume={87},
  publisher={Springer-Verlag},
  address={New York},
  year={1982}
}

@article{barmak2013star,
  title={Star clusters in independence complexes of graphs},
  author={Barmak, Jonathan Ariel},
  journal={Advances in Mathematics},
  volume={241},
  pages={33--57},
  year={2013}
}

@book{godsil2016erdos,
  title={Erd{\H{o}}s--{K}o--{R}ado Theorems: Algebraic Approaches},
  author={Godsil, Christopher and Meagher, Karen},
  series={Cambridge Studies in Advanced Mathematics},
  volume={149},
  publisher={Cambridge University Press},
  address={Cambridge},
  year={2016}
}

@article{lovasz1978kneser,
  title={Kneser's conjecture, chromatic number, and homotopy},
  author={Lov{\'a}sz, L{\'a}szl{\'o}},
  journal={Journal of Combinatorial Theory, Series A},
  volume={25},
  number={3},
  pages={319--324},
  year={1978}
}

@article{adamaszek2022vietoris,
  title={On {V}ietoris--{R}ips complexes of hypercube graphs},
  author={Adamaszek, Micha{\l} and Adams, Henry},
  journal={Journal of Applied and Computational Topology},
  volume={6},
  pages={177--192},
  year={2022}
}

@article{adamaszek2020homotopy,
  title={On homotopy types of {V}ietoris--{R}ips complexes of metric gluings},
  author={Adamaszek, Micha{\l} and Adams, Henry and Gasparovic, Ellen and
          Gommel, Maria and Purvine, Emilie and Sazdanovi{\'c}, Radmila and
          Wang, Bei and Wang, Yusu and Ziegelmeier, Lori},
  journal={Journal of Applied and Computational Topology},
  volume={4},
  pages={424--454},
  year={2020}
}

@article{adamaszek2017homotopy,
  title={On homotopy types of {E}uclidean {R}ips complexes},
  author={Adamaszek, Micha{\l} and Frick, Florian and Vakili, Adrien},
  journal={Discrete \& Computational Geometry},
  volume={58},
  number={3},
  pages={526--542},
  year={2017}
}

@misc{adams2018vietoris,
  title={{V}ietoris--{R}ips complexes of regular polygons},
  author={Adams, Henry and Chowdhury, Samir and Jaffe, Adam Quinn and
          Sibanda, Bonginkosi},
  note={Preprint, arXiv:1807.10971},
  year={2018}
}

@article{feng2024vietoris,
  title={On {V}ietoris--{R}ips complexes of finite metric spaces with scale 2},
  author={Feng, Ziqin and Nukala, N. C. P.},
  journal={Journal of Homotopy and Related Structures},
  volume={19},
  pages={79--98},
  year={2024}
}

@incollection{gasparovic2018complete,
  title={A complete characterization of the one-dimensional intrinsic
         {\v{C}}ech persistence diagrams for metric graphs},
  author={Gasparovic, Ellen and Gommel, Maria and Purvine, Emilie and
          Sazdanovi{\'c}, Radmila and Wang, Bei and Wang, Yusu and
          Ziegelmeier, Lori},
  booktitle={Research in Computational Topology},
  publisher={Springer},
  pages={33--56},
  year={2018}
}

@book{diestel2017graph,
  title={Graph {T}heory},
  author={Diestel, Reinhard},
  edition={Fifth},
  series={Graduate Texts in Mathematics},
  volume={173},
  publisher={Springer},
  address={Berlin},
  year={2017}
}
\end{document}